\documentclass[10pt]{article}
\usepackage{amsmath, amsthm, amssymb, amsfonts, mathrsfs, mathtools}
\usepackage{graphicx}
\usepackage{makeidx}
\usepackage[left=2cm,right=2cm,top=2cm,bottom=2cm]{geometry}
\usepackage{caption}
\usepackage{subcaption}
\usepackage{tikz}
\usetikzlibrary{decorations.pathreplacing}
\tikzstyle{vertex}=[auto=left,circle,draw=black,fill=white, inner sep=1.5]

\usepackage{hyperref}
\hypersetup{colorlinks=true, citecolor={blue}, linkcolor=blue, filecolor=magenta, urlcolor=cyan}

\newtheorem{theorem}{Theorem}[section]

\newtheorem{lem}[theorem]{Lemma}

\newtheorem{definition}{Definition}

\textheight=22cm \textwidth=16cm \oddsidemargin=0.2in
\evensidemargin=0.2in \topmargin=-0.25in

\title{Some Bounds on the Double Domination of Signed Generalized Petersen Graphs and Signed I-Graphs}

\author{ Deepak Sehrawat\\
Department of Mathematics\\
Indian Institute of Technology Guwahati\\
Guwahati, India - 781039\\
Email: deepakmath55555@iitg.ac.in\\
\\
Bikash Bhattacharjya\\
Department of Mathematics\\
Indian Institute of Technology Guwahati\\
Guwahati, India - 781039\\
Email: b.bikash@iitg.ac.in
}
\date{}

\begin{document}
\maketitle

\vspace{-0.3in}

\vspace*{0.3in}
\noindent
\textbf{Abstract.} In a graph $G$, a vertex dominates itself and its neighbors. A subset
$D \subseteq V(G)$ is a double dominating set of $G$ if $D$ dominates every vertex of $G$ at least twice. A signed graph $\Sigma = (G,\sigma)$ is a graph $G$ together with an assignment $\sigma$ of positive or negative signs to all its edges. A cycle in a signed graph is positive if the product of its edge signs is positive. A signed graph is balanced if all its cycles are positive. A subset $D \subseteq V(\Sigma)$ is a double dominating set of $\Sigma$ if it satisfies the following conditions: (i) $D$ is a double dominating set of $G$, and (ii) $\Sigma[D:V \setminus D]$ is balanced, where $\Sigma[D:V \setminus D]$ is the subgraph of $\Sigma$ induced by the edges of $\Sigma$ with one end point in $D$ and the other end point in $V \setminus D$. The cardinality of a minimum double dominating set of $\Sigma$ is the double domination number $\gamma_{\times 2}(\Sigma)$. In this paper, we give bounds for the double domination number of signed cubic graphs. We also obtain some bounds on the double domination number of signed generalized Petersen graphs and signed I-graphs.

\vspace*{0.3in}
\noindent
\textbf{Keywords.} balance, switching, double domination, signed graph, generalized Petersen graph, I-graph.

\section{Introduction}
We consider only finite and simple graphs. For all the graph theoretic terms which are used in this paper but not defined, we refer the reader to~\cite{Bondy}. 

Let $G=(V,E)$ be a graph. We denote by $|V(G)|$ and $|E(G)|$ the size of the vertex set and the edge set of $G$, respectively. The \textit{open neighborhood} of a vertex $v \in V$ is $N(v) = \{u \in V~|~uv \in E \}$ and the \textit{closed neighborhood} is $N[v]= N(v) \cup \{v\}$. A vertex $v$ is said to dominate itself and its neighbors. A subset $D \subseteq V$ is said to be a \textit{dominating set} (DS) of $G$ if every vertex of $G$ is dominated by $D$. Equivalently, a subset $D$ of vertices is a \textit{dominating set} of $G$ if for each $v\in V$, $|N[v] \cap D| \geq 1$. The minimum cardinality of a dominating set is called the \textit{domination number} (DN) of $G$ and it is denoted by $\gamma(G)$. Different types of domination have been researched extensively. The literature on the studies of domination has been surveyed and detailed in the books ~\cite{Haynes} and \cite{Haynes2}.

In~\cite{Harary3}, Harary and Haynes defined a generalization of domination as follows: a subset $D \subseteq V$ is a \textit{k-tuple dominating} set of $G$ if for every vertex $v \in V$, either $v$ is in $D$ and has at least $k-1$ neighbors in $D$ or $v$ is in $V\setminus D$ and has at least $k$ neighbors in $D$. Equivalently, a subset $D \subseteq V$ is a \textit{k-tuple dominating} set of $G$ if for each $v\in V$, $|N[v] \cap D| \geq k$. A $2\text{-tuple}$ dominating set is called a \textit{double dominating set} (DDS). The cardinality of a minimum DDS of $G$ is called the \textit{double domination number} (DDN) of $G$. Some bounds for the double domination number in graphs are given in \cite{Hajian} and \cite{Harant}.

In~\cite{Harary2}, Harary introduced the notion of signed graphs and balance. A \textit{signed graph} is a graph whose edges are labelled with positive or negative signs. We denote it by $\Sigma = (G,\sigma)$, where $G$ is called the underlying graph of $\Sigma$ and $\sigma$ is called the \textit{signature~(signing)} of $G$. A signature $\sigma$ can also be viewed as a function from $E(G)$ into $\{+, -\}$. If the edges of $\Sigma$ are all positive, \textit{i.e.,} $\sigma^{-1}(-) = \emptyset$, then the signed graph is called the \textit{all positive} signed graph, and we denote it by $|\Sigma|$.

In a signed graph, \textit{switching} a vertex $v$ is to change the sign of each edge incident to $v$. If we switch every vertex of a subset $X$ of vertices, then we write the resulting signed graph as $\Sigma^{X}$. We say a signature $\Sigma_{1}$ is \textit{switching equivalent} or simply \textit{equivalent} to a signature $\Sigma_{2}$, denoted by $\Sigma_{1} \sim \Sigma_{2}$, if both $\Sigma_{1}$ and $\Sigma_{2}$ have the same underlying graph $G$ and $\Sigma_{1} = \Sigma_{2}^{X}$ for some $X \subseteq V(G)$.

A cycle in a signed graph is called \emph{positive} if the product of signs of its edges is positive, and negative, otherwise. A signed graph is \textit{balanced} if each of its cycles is balanced. The following theorem gives a necessary and sufficient condition for two signed graphs $\Sigma_{1}$ and $\Sigma_{2}$ to be switching equivalent.

\begin{theorem}~\cite{Zaslavsky}
\label{Signature}
Two signed graphs $\Sigma_{1}$ and $\Sigma_{2}$ are switching equivalent if and only if they have the same set of negative cycles.
\end{theorem}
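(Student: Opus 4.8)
\section*{Proof proposal}

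The plan is to treat the two implications separately, since they have very different flavours: necessity is a one-line parity observation, while sufficiency is where the real work (and the use of balance) lives. For necessity, I would first record the effect of switching a whole set: switching $X \subseteq V(G)$ changes the sign of an edge exactly when that edge has precisely one endpoint in $X$, because an edge with both ends in $X$ has its sign flipped twice. Fix any cycle $C$ and any switching set $X$. Traversing $C$ once returns to the starting vertex, so the number of transitions between ``inside $X$'' and ``outside $X$'' is even; equivalently, the number of edges of $C$ lying in the edge cut of $X$ is even. Since these are exactly the edges of $C$ whose sign is flipped, the product of edge signs along $C$ is multiplied by $+1$. Hence switching preserves the sign of every cycle, and therefore $\Sigma_{1} \sim \Sigma_{2}$ forces $\Sigma_{1}$ and $\Sigma_{2}$ to have the same set of negative cycles.

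For the converse, suppose $\Sigma_{1} = (G,\sigma_{1})$ and $\Sigma_{2} = (G,\sigma_{2})$ share the same underlying graph and the same set of negative cycles. I would introduce the pointwise product signature $\tau(e) = \sigma_{1}(e)\,\sigma_{2}(e)$, viewing signs multiplicatively in $\{+1,-1\}$. For any cycle $C$, the sign of $C$ in $(G,\tau)$ equals the product of the signs of $C$ in $\Sigma_{1}$ and in $\Sigma_{2}$; by hypothesis these two signs coincide for every $C$, so their product is always positive. Thus every cycle of $(G,\tau)$ is positive, i.e.\ $(G,\tau)$ is balanced. Observe that finding $X$ with $\sigma_{1}^{X} = \sigma_{2}$ is exactly the problem of finding $X$ for which an edge lies in the cut of $X$ precisely when $\tau(e) = -$, so the whole converse reduces to showing that a balanced signature is the cut signature of some vertex set.

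To produce this set I would argue componentwise and assume $G$ connected. Fix a spanning tree $T$ rooted at a vertex $r$, set a potential $p(r) = +$, and propagate $p(v) = p(u)\,\tau(uv)$ along the tree edges, which is well defined since $T$ is acyclic and gives $\tau(uv) = p(u)p(v)$ on every tree edge. Taking $X = \{\,v : p(v) = -\,\}$, a tree edge $uv$ lies in the cut of $X$ exactly when $p(u) \neq p(v)$, i.e.\ when $\tau(uv) = -$, as required. The main obstacle, and the only place where balance is genuinely used, is checking that the identity $\tau(uv) = p(u)p(v)$ persists on the non-tree edges: for such an edge the fundamental cycle it forms with $T$ is positive, and multiplying out the $\tau$-values around that cycle against the telescoping product of tree potentials pins down $\tau(uv) = p(u)p(v)$. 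Once this holds for all edges, $X$ realizes $\tau$ as its cut signature, whence $\sigma_{1}^{X} = \sigma_{2}$ and $\Sigma_{1} \sim \Sigma_{2}$, completing the proof.
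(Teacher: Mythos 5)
Your proof is correct. Note, however, that the paper offers no proof of this statement at all --- it is quoted verbatim from Zaslavsky as a known result --- so there is nothing internal to compare your argument against. What you have written is the standard, self-contained proof of this classical fact: the forward direction via the parity observation that a cycle meets the edge cut of any switching set $X$ in an even number of edges, and the converse by forming the product signature $\tau = \sigma_{1}\sigma_{2}$, noting it is balanced, and realizing it as a cut signature through spanning-tree potentials (which is precisely Harary's characterization of balance, applied componentwise). All the steps are sound, including the only delicate one: using positivity of the fundamental cycle of a non-tree edge $uv$ together with the telescoping product $p(u)p(v)$ along the tree path to extend the identity $\tau(uv)=p(u)p(v)$ from tree edges to all edges.
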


Let $X \subseteq V(G)$ and $Y = V(G) \setminus X$. We denote by $[X:Y]$ the set of edges of $G$ with one end point in $X$ and the other end point in $Y$, and $|[X:Y]|$ denotes the number of edges in $[X:Y]$. The set $[X:Y]$ is called the \textit{edge cut} of $G$ associated with $X$. Further, $G[X:Y]$ denotes the subgraph of $G$ induced by the edges of $[X:Y]$. Similarly for $\Sigma = (G,\sigma)$, we denote by $\Sigma [X:Y]$ the subgraph induced by the edges of $\Sigma$ with one end point in $X$ and the other end point in $Y$.

Several notions of graph theory, such as the theory of nowhere zero flows and the theory of minors and graph homomorphisms, have been already extended to signed graphs. In 2013, Acharya~\cite{Acharya} extended the concept of domination to signed graphs. In 2016, Ashraf and Germina~\cite{Ashraf} generalized the notion of double domination to signed graphs as follows.

\begin{definition}\label{def_dd}~\cite{Ashraf}
\rm{A subset $D \subseteq V$ is a \textit{double dominating set} of  a signed graph $\Sigma$ if it satisfies the following two conditions: (i) for every $v \in V,~~|N[v] \cap D| \geq 2$ and, (ii) $\Sigma [D : V\setminus D]$ is balanced.}
\end{definition}

Clearly, Definition~\ref{def_dd} takes care of the concept of double domination in unsigned graphs. The cardinality of a minimum DDS of $\Sigma$ is called the double domination number of $\Sigma$ and is denoted by $\gamma_{\times 2}(\Sigma)$. The following theorem shows that the double domination is switching invariant. 

\begin{theorem}\cite{Ashraf}
Double domination is invariant under switching.
\end{theorem}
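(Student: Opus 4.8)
The plan is to prove the sharper set-level statement that switching does not change the family of double dominating sets at all: for any $X \subseteq V(G)$, a set $D$ is a double dominating set of $\Sigma$ if and only if it is a double dominating set of $\Sigma^{X}$. Once this is established, $\Sigma$ and $\Sigma^{X}$ have identical collections of double dominating sets, so in particular their minimum cardinalities coincide, i.e. $\gamma_{\times 2}(\Sigma) = \gamma_{\times 2}(\Sigma^{X})$. Since switching equivalence is by definition generated by such operations, this yields the claimed invariance.

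To compare the two double domination conditions, I would treat them one at a time. Condition (i), namely $|N[v] \cap D| \geq 2$ for every $v$, refers only to the underlying graph $G$ and its closed neighborhoods; switching alters edge signs but never edges or vertices, so $G$ is unchanged and condition (i) holds for $D$ in $\Sigma$ precisely when it holds for $D$ in $\Sigma^{X}$. Thus the entire content lies in condition (ii): I must show that $\Sigma[D : V \setminus D]$ is balanced if and only if $\Sigma^{X}[D : V \setminus D]$ is balanced.

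The key step, and the place where care is needed, is to relate the cut of the switched graph to a switching of the cut of the original graph. Writing $W$ for the vertex set of the cut subgraph $G[D : V \setminus D]$ and $X' = X \cap W$, I would verify edge by edge that $\Sigma^{X}[D : V \setminus D] = \left(\Sigma[D : V \setminus D]\right)^{X'}$. Indeed, a cut edge $uv$ (with $u \in D$, $v \in V \setminus D$) reverses sign under the global switch by $X$ exactly when precisely one of $u, v$ lies in $X$; since $u, v \in W$, this happens exactly when precisely one of them lies in $X'$, which is the effect of switching the cut subgraph by $X'$. Hence the two signatures on $G[D : V \setminus D]$ are switching equivalent.

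It then remains to invoke that balance is a switching invariant. This follows from Theorem~\ref{Signature}: switching equivalent signed graphs have the same set of negative cycles, so one is balanced (has no negative cycle) if and only if the other is. Applying this to $\Sigma[D : V \setminus D]$ and its switch $\left(\Sigma[D : V \setminus D]\right)^{X'} = \Sigma^{X}[D : V \setminus D]$ shows condition (ii) is preserved, and combining with the trivial preservation of condition (i) completes the equivalence of double dominating sets, hence the equality of the double domination numbers. I expect the only genuinely delicate point to be the localization identity $\Sigma^{X}[D : V \setminus D] = \left(\Sigma[D : V \setminus D]\right)^{X \cap W}$; everything else is bookkeeping or a direct appeal to Theorem~\ref{Signature}.
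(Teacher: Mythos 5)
Your argument is correct: condition (i) is purely a property of the underlying graph, the localization identity $\Sigma^{X}[D:V\setminus D]=\bigl(\Sigma[D:V\setminus D]\bigr)^{X\cap W}$ holds by the edge-by-edge check you describe, and balance is preserved under switching by Theorem~\ref{Signature}. The paper itself gives no proof of this statement (it is quoted from Ashraf and Germina), but your route is the standard one and proves the stronger fact that the collection of double dominating sets is itself switching-invariant, which immediately yields equality of the double domination numbers.
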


%In this paper, we will see that for two different (non-equivalent) signed graphs with the same underlying graph, a DDS of one signed graph need not be a DDS of the other signed graph. That means DDS of a signed graph depends on its signature. So in general it becomes necessary that we construct DDS of a signed cubic graph in such a way that it works for all possible signatures.

\begin{definition}\label{def_GPG}
\rm{For positive integers $n$ and $k$ satisfying $2 \leq 2k <n$, the \textit{generalised Petersen graph} $P_{n,k}$ is defined by $$V(P_{n,k}) = \{u_{0},u_{1},...,u_{n-1},v_{0},v_{1},...,v_{n-1}\}~ \text{and} ~E(P_{n,k}) = \{u_{i}u_{i+1},u_{i}v_{i},v_{i}v_{i+k}~|~i=0,1,...,n-1\},$$ where the subscripts are read modulo $n$.}
\end{definition}

We denote the sets $\{u_{0},u_{1},...,u_{n-1}\}$ and $\{v_{0},v_{1},...,v_{n-1}\}$ by $U$ and $V_{v}$, respectively. From the definition, it is clear that $P_{n,k}$ is a cubic graph and $P_{5,2}$ is the well-known Petersen graph. The edges $u_{i}v_{i}$ for $0 \leq i\leq n-1$ are called the \textit{spokes} and we denote the set of spokes by $S_{s}$. The cycle induced by vertices of $U$ is called the \textit{outer cycle} of $P_{n,k}$ and is denoted by $C_{o}$. The cycle(s) induced by vertices of $V_{v}$ is(are) called the \textit{inner cycle(s)} of $P_{n,k}$. If $\text{gcd}(n,k) = d$ then the subgraph induced by vertices of $V_{v}$, consists of $d$ pairwise disjoint $\frac{n}{d}$-cycles. If $d >1$ then no two vertices among $v_{0}, v_{1},..., v_{d-1}$ can be in the same $\frac{n}{d}$-cycle.

\begin{definition}
\rm{The I-graph $I(n,j,k)$ is a graph with vertex set $$V(I(n,j,k)) = \{u_{0},u_{1},...,u_{n-1},v_{0},v_{1},...,v_{n-1}\}$$ and edge set $$E(I(n,j,k)) = \{u_{i}u_{i+j},u_{i}v_{i},v_{i}v_{i+k}~|~i=0,1,...,n-1\},$$ where subscripts are read modulo $n$.}
 \end{definition}
 
 The class of generalized Petersen graphs is a sub-class of the class of I-graphs.

 In \cite{Boben}, Boben, Pisanski and Zitnik have studied various properties of I-graphs such as connectedness, girth, and whether they are bipartite or vertex-transitive. They also characterized the automorphism groups of I-graphs. 

The rest of this paper is organized as follows. First we give a lower bound and an upper bound for the DDN of signed cubic graphs. Further, we show that if $D$ is a DDS of a cubic graph $G$ such that $|D|= \frac{|V(G)|}{2}$ then $G[D : V\setminus D]$ admits a cycle decomposition. Also we show that if $D$ with $|D|= \frac{|V(G)|}{2}$ is not a DDS of a cubic graph $G$ then it is not necessarily true that $G[D : V\setminus D]$ admits a cycle decomposition. Second we obtain some bounds on the DDN of signed generalized Petersen graphs. Finally, we give bounds on the DDN of signed I-graphs.

\section{Bounds on DDN of Signed Cubic Graph}

In \cite{Ashraf} the authors obtained a bound on the double domination number of a signed graph.

\begin{theorem}\cite{Ashraf}\label{bound_0}
Let $\Sigma$ be any signed graph without isolated vertices on $n$ vertices, then $2\leq \gamma_{\times 2}(\Sigma) \leq n$. Moreover, these bounds are sharp.
\end{theorem}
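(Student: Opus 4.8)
The plan is to establish both the lower and upper bounds separately and then exhibit signed graphs meeting each bound.

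For the lower bound $\gamma_{\times 2}(\Sigma) \geq 2$, I would argue directly from condition (i) of Definition~\ref{def_dd}. Since $\Sigma$ has no isolated vertices, pick any vertex $v$; a double dominating set $D$ must satisfy $|N[v] \cap D| \geq 2$, which immediately forces $|D| \geq 2$. This handles the lower bound with essentially no work, and it does not even use the balance condition (ii).

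For the upper bound $\gamma_{\times 2}(\Sigma) \leq n$, the natural candidate is $D = V(\Sigma)$, the entire vertex set. First I would check condition (i): for every $v \in V$, since $v$ is not isolated it has at least one neighbor, so $N[v]$ contains $v$ together with at least one neighbor, giving $|N[v] \cap V| = |N[v]| \geq 2$. Then condition (ii) is automatic: when $D = V$, the set $V \setminus D$ is empty, so the edge cut $[D : V \setminus D]$ is empty and the induced subgraph $\Sigma[D : V \setminus D]$ has no edges, hence no cycles, and is therefore vacuously balanced. Thus $V$ is a double dominating set of size $n$, proving $\gamma_{\times 2}(\Sigma) \leq n$.

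For sharpness, I would give explicit examples. The lower bound $\gamma_{\times 2} = 2$ is attained by any signed complete graph $K_n$ (or more simply $K_2$, where $D = V$ gives exactly $2$, simultaneously showing both bounds coincide in the trivial case): in $K_n$, any two adjacent vertices already double-dominate every vertex, and the cut of a two-element set is a star, which is acyclic and hence balanced, so $D$ of size $2$ works. For the upper bound, I would look for a signed graph where no proper subset can serve as a double dominating set — the all-negative signed cycle $C_n$ (or a disjoint union of edges) is a good candidate, since in a cycle each vertex has degree $2$ and condition (i) already pushes $|D|$ close to $n$, while the balance condition on the cut rules out the remaining small sets. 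I expect the main obstacle to be the sharpness of the upper bound: one must identify a family where the interaction between the double-covering requirement (i) and the balance-of-the-cut requirement (ii) genuinely forces $D = V$, since for most graphs proper subsets suffice and a careless choice will not be tight. Verifying that the chosen family admits no double dominating set of size $n-1$ will require a short case analysis on the structure of the edge cut.
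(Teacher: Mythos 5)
The paper itself offers no proof of this theorem --- it is quoted from \cite{Ashraf} --- so there is nothing to compare against; on its own terms your core argument is correct and is the standard one: condition (i) of Definition~\ref{def_dd} applied to any single vertex forces $|D|\geq 2$, and $D=V$ satisfies (i) because there are no isolated vertices and satisfies (ii) vacuously because the cut $[V:\emptyset]$ is empty. Two of your sharpness examples, however, are wrong as justified. First, the cut of a two-element set $\{u,w\}$ in $K_n$ is not a star but the complete bipartite graph $K_{2,n-2}$, which contains $4$-cycles as soon as $n\geq 4$; for a signature making one of those $4$-cycles negative, your size-$2$ set violates condition (ii). The lower bound is still sharp --- $K_2$ with its unique signature, or the all-positive $K_n$, works --- but the reason you gave does not. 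Second, the all-negative cycle $C_n$ does not attain the upper bound: for $D=V\setminus\{x\}$ every vertex still has two closed neighbors in $D$, and the cut is the path formed by the two edges at $x$, which is acyclic and hence balanced regardless of signs, so $\gamma_{\times 2}\leq n-1$ there. Your other candidate, a disjoint union of edges, does work and for the cleanest possible reason: condition (i) alone forces every vertex and its unique neighbor into $D$, so $D=V$, and no appeal to the balance condition is needed. (This is the same mechanism the paper uses later, after Theorem~\ref{general bounds}, with disjoint copies of $K_4$.) So the proof is completable from what you wrote, but you should replace the $K_n$ star claim and drop the negative cycle in favor of the matching example.
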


In the following theorem, we show that the lower bound of Theorem~\ref{bound_0} can be improved if the underlying graph of $\Sigma$ is cubic.

\begin{theorem}\label{general bounds}
For $m\geq 2$, let $\Sigma$ be any signed cubic graph on $2m$ vertices. Then $$m \leq \gamma_{\times 2}(\Sigma) \leq 2m.$$
\end{theorem}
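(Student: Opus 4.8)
The plan is to establish the two inequalities separately. For the upper bound $\gamma_{\times 2}(\Sigma) \leq 2m$, this is immediate from Theorem~\ref{bound_0}, since $\Sigma$ has $n = 2m$ vertices; I would simply note that the whole vertex set $V$ is trivially a double dominating set (its induced cut $\Sigma[V : \emptyset]$ is empty, hence balanced, and every vertex lies in $D$ with enough neighbors there because the graph is cubic and has no isolated vertices). So the real content is the lower bound.

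For the lower bound $m \leq \gamma_{\times 2}(\Sigma)$, my approach is a counting argument on the closed-neighborhood coverage. Let $D$ be any double dominating set, and recall condition (i): every vertex $v$ satisfies $|N[v] \cap D| \geq 2$. I would sum this inequality over all $2m$ vertices to get
\begin{equation*}
\sum_{v \in V} |N[v] \cap D| \geq 2 \cdot 2m = 4m.
\end{equation*}
The key step is to re-express the left-hand side by swapping the order of summation. For each $d \in D$, the vertex $d$ contributes to the count $|N[v] \cap D|$ exactly for those $v$ with $d \in N[v]$, i.e.\ for $v \in N[d]$. Hence
\begin{equation*}
\sum_{v \in V} |N[v] \cap D| = \sum_{d \in D} |N[d]| = \sum_{d \in D} (\deg(d) + 1).
\end{equation*}
Since $G$ is cubic, $\deg(d) = 3$ for every $d$, so each term equals $4$ and the right side is $4|D|$. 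Combining gives $4|D| \geq 4m$, i.e.\ $|D| \geq m$, which is exactly the claimed bound.

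I expect the counting identity $\sum_{v} |N[v] \cap D| = \sum_{d \in D} |N[d]|$ to be the conceptual crux, though it is routine once phrased as a double-counting of incident (vertex, dominator) pairs. One subtlety to address cleanly is that condition (ii) on balance plays no role in the lower bound — it only restricts which sets qualify as double dominating sets, and since we take $D$ to be an \emph{arbitrary} double dominating set and derive $|D| \geq m$ from condition (i) alone, the bound holds a fortiori; I would remark on this so the reader sees that the cubic degree condition, not the signature, drives the inequality. Finally, I would note that since $m \geq 2$ the interval $[m, 2m]$ is nonempty and the statement is meaningful, and one might add a sentence indicating that both bounds are attainable (the lower bound forcing $|D| = m$ corresponds precisely to the half-size double dominating sets studied later in the paper via cycle decompositions of the induced cut).
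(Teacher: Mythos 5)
Your proof is correct, but it takes a genuinely different route from the paper's. The paper argues by contradiction: it supposes a DDS $D$ of size exactly $m-1$ exists and counts the edge cut $[D:V\setminus D]$ from both sides --- each vertex of $D$ keeps at least one neighbour inside $D$ (so the cut has at most $2m-2$ edges), while each of the $m+1$ vertices outside $D$ needs at least two neighbours in $D$ (so the cut has at least $2m+2$ edges) --- and then asserts that this rules out all smaller sets as well. You instead double-count incidences $(v,d)$ with $d\in N[v]\cap D$: condition (i) gives $\sum_{v}|N[v]\cap D|\geq 4m$, while regularity gives $\sum_{d\in D}|N[d]|=4|D|$, so $|D|\geq m$ for \emph{every} DDS at once. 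Your version is arguably tidier on two counts: it applies uniformly to any candidate size (the paper's argument, as written, only excludes size $m-1$ and leaves the extension to smaller sizes implicit --- note that one cannot simply enlarge a smaller signed DDS, since condition (ii) is not preserved under adding vertices, though the same cut-counting does go through for any $|D|<m$), and it generalizes immediately to $k$-tuple domination in $r$-regular graphs, giving $|D|\geq kn/(r+1)$. What the paper's edge-cut formulation buys is continuity with the next result: the same cut count, run at $|D|=m$, is exactly what yields the $2$-regularity of $G[D:V\setminus D]$ in Lemma~\ref{2-regular lemma}. Your closing remarks --- that condition (ii) plays no role in the lower bound, and that equality corresponds to the half-size sets studied later --- are both accurate.
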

\begin{proof}
The upper bound follows from Theorem~\ref{bound_0}. 

To get the lower bound all we need is to show that $\Sigma$ cannot have a DDS of size $m-1$. Suppose on the contrary that there exists a DDS $D$ of $\Sigma$ such that $|D|=m-1$. It is clear that each vertex of $D$ is adjacent to at most two vertices of $V \setminus D$ because $D$ is a DDS. Thus
\begin{equation}\label{eq 1}
|[D:V \setminus D]| \leq 2m-2.
\end{equation}

Also, since $D$ is a DDS, every vertex of $V \setminus D$ is adjacent to at least two vertices of $D$. Thus
\begin{equation}\label{eq 2}
|[V \setminus D:D]| \geq 2m+2.
\end{equation}
But it is impossible for a set $D$ to satisfy  (\ref{eq 1}) and (\ref{eq 2}) simultaneously. Thus the set $D$ cannot be a DDS of $\Sigma$. This implies that any DDS of $G$ (hence of $\Sigma$) must be of size at least $m$. Therefore we have $\gamma_{\times 2}(\Sigma) \geq m$, and this completes the proof.
\end{proof}

Note that the lower bound of Theorem~\ref{general bounds} can be achieved. For that, let $G$ be a disjoint union of $m$ copies of $K_{4}$. Take $\Sigma = (G,\sigma)$ such that all the edges of $\Sigma$ are positive, that is $\sigma^{-1}(-) = \emptyset$. It is then clear that $\Sigma$ is a signed cubic graph on $4m$ vertices and $\gamma_{\times 2}(\Sigma) = 2m$.

As an application of DDS, we show that if $D$ with $|D|= \frac{|V(G)|}{2}$ is a DDS of a cubic graph $G$ then $G[D:V \setminus D]$ admits a cycle decomposition. 

\begin{lem}\label{2-regular lemma}
For $m\geq 2$, let $G$ be a cubic graph on $2m$ vertices. If $D$ is a DDS of $G$ such that $|D|=m$ then $G[D:V \setminus D]$ is a 2-regular subgraph of $G$.
\end{lem}
\begin{proof}
Given that $D$ is a DDS of $G$ such that $|D|=m$, where $G$ is a cubic graph on $2m$ vertices. We complete the proof by showing that each vertex of $D$ and $V \setminus D$ is adjacent to exactly two vertices of $V \setminus D$ and $D$, respectively. 

Suppose on the contrary that there are $r$ vertices of $D$ each of which are adjacent to at most one vertex of $V \setminus D$, where $1\leq r \leq m$. Since $D$ is a DDS, each vertex of the remaining $m-r$ vertices of $D$ is adjacent to exactly two vertices of $V \setminus D$. Therefore 
\begin{equation}\label{eq 3}
|[D:V \setminus D]| \leq 2(m-r)+r = 2m-r.
\end{equation}
On the other hand, each vertex of $V \setminus D$ is adjacent to at least two vertices of $D$ as $D$ is a DDS. So we have 
\begin{equation}\label{eq 4}
|[V \setminus D:D]| \geq 2m.
\end{equation}
As $|[D:V \setminus D]|=|[V \setminus D:D]|$, inequalities (\ref{eq 3}) and (\ref{eq 4}) cannot hold simultaneously. Thus every vertex of $D$ is adjacent to exactly two vertices of $V \setminus D$. Similarly it can be shown that every vertex of $V \setminus D$ is adjacent to exactly two vertices of $D$. Hence $G[D:V \setminus D]$ is a 2-regular subgraph of $G$, and the proof is complete.
\end{proof}

A graph in which each vertex has even degree is called an \textit{even graph}. Veblen's theorem (see Theorem 2.7,~\cite{ Bondy}) says that a graph admits a cycle decomposition if and only if it is even. The following theorem is a direct consequence of Veblen's theorem and Lemma~\ref{2-regular lemma}.

\begin{theorem}\label{cor_1}
For $m\geq 2$, let $G$ be a cubic graph on $2m$ vertices. If $D$ is a DDS of $G$ such that $|D|=m$ then $G[D:V \setminus D]$ admits a cycle decomposition.
\end{theorem}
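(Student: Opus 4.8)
The plan is to obtain the conclusion directly by chaining together the two ingredients already at hand: the structural result of Lemma~\ref{2-regular lemma} and Veblen's characterization of graphs that admit a cycle decomposition. Since the substantive combinatorial work has been carried out in establishing the 2-regularity, the argument will be short and essentially a verification.

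First I would invoke Lemma~\ref{2-regular lemma} under the present hypotheses---$G$ a cubic graph on $2m$ vertices and $D$ a DDS with $|D|=m$---to conclude that the edge-cut subgraph $G[D:V\setminus D]$ is a 2-regular subgraph of $G$. This is the key input, and it is the only point at which the double-domination hypothesis together with the exact balance $|D|=m$ is used; everything afterwards is formal.

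Next I would observe that 2-regularity immediately supplies the hypothesis of Veblen's theorem. Every vertex of $G[D:V\setminus D]$ has degree exactly $2$, and $2$ is even, so $G[D:V\setminus D]$ is an \emph{even} graph in the sense required. Applying Veblen's theorem (Theorem~2.7 of~\cite{Bondy}) then yields the desired cycle decomposition of $G[D:V\setminus D]$, completing the proof. (Alternatively, one could bypass Veblen entirely and note that a finite 2-regular graph is a disjoint union of cycles, which is itself a cycle decomposition; routing through Veblen's theorem is merely the cleaner presentation.)

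There is no genuine obstacle at this stage: the delicate edge-counting argument that forces 2-regularity has already been discharged in Lemma~\ref{2-regular lemma}, and the remaining content reduces to the trivial remark that a $2$-regular graph is even. The theorem is thus a direct corollary, and the only care needed is to state the chain of implications cleanly and cite the two prior results correctly.
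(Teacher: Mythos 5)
Your proposal is correct and matches the paper's argument exactly: the paper also derives Theorem~\ref{cor_1} as a direct consequence of Lemma~\ref{2-regular lemma} together with Veblen's theorem, noting that a 2-regular graph is even. No further comment is needed.
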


If $D$ is not a DDS of a cubic graph $G$ such that $|D|= \frac{|V(G)|}{2}$, then it is not necessary that $G[D:V \setminus D]$ is the union of vertex disjoint cycles. For instance, let $G = P_{4,1}$ and $D =
\{u_{0},u_{1},u_{2},u_{3}\}$. It is easy to see that $G[D:V \setminus D]$ is not a union of vertex disjoint cycles.

\subsection{Bounds on $\gamma_{\times 2}(P_{n,k},\sigma)$}

A DDS of $(P_{n,k}, \sigma)$ need not be a DDS of $(P_{n,k}, \sigma')$, where $\sigma'$ is not equivalent to $\sigma$. For example, let $\Sigma = (P_{4,1}, \sigma)$, where $\sigma$ is a signature of $P_{4,1}$ for which the outer cycle $C_{o}$ and the inner cycle $C_{i}$ are positive. It is easy to check that the set $D = \{u_{0},v_{0},u_{2},v_{2}\}$, see Figure~\ref{P4}, forms a DDS of $\Sigma$. But if we take a signature $\sigma'$ for which $C_{o}$ and $C_{i}$ are negative, then $D = \{u_{0},v_{0},u_{2},v_{2}\}$ does not satisfy the condition (ii) of Definition~\ref{def_dd}.

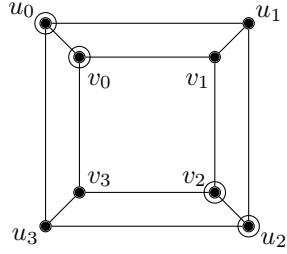
\begin{figure}[h]
\centering
\begin{tikzpicture}[scale=0.45]
%\draw [help lines] (0,0) grid (85,35);
\node[vertex] (v1) at (0,6) {};
\draw [black] (0,6) circle (9pt);
\node [below] at (-0.7,6.9) {$u_{0}$};
\node[vertex] (v2) at (6,6) {};
\node [below] at (6.6,6.8) {$u_{1}$};
\node[vertex] (v3) at (6,0) {};
\draw [black] (6,0) circle (9pt);
\node [below] at (6.75,0.1) {$u_{2}$};
\node[vertex] (v4) at (0,0) {};
\node [below] at (-0.6,0.2) {$u_{3}$};
\node[vertex] (v5) at (1,5) {};
\draw [black] (1,5) circle (9pt);
\node [below] at (1.6,4.8) {$v_{0}$};
\node[vertex] (v6) at (5,5) {};
\node [below] at (4.5,4.8) {$v_{1}$};
\node[vertex] (v7) at (5,1) {};
\draw [black] (5,1) circle (9pt);
\node [below] at (4.4,1.9) {$v_{2}$};
\node[vertex] (v8) at (1,1) {};
\node [below] at (1.6,1.9) {$v_{3}$};

\filldraw [black] (0,6) circle (3.5pt);
\filldraw [black] (6,6) circle (3.5pt);
\filldraw [black] (6,0) circle (3.5pt);
\filldraw [black] (0,0) circle (3.5pt);
\filldraw [black] (1,5) circle (3.5pt);
\filldraw [black] (5,5) circle (3.5pt);
\filldraw [black] (5,1) circle (3.5pt);
\filldraw [black] (1,1) circle (3.5pt);

\foreach \from/\to in {v1/v2,v2/v3,v3/v4,v4/v1,v5/v6,v6/v7,v7/v8,v8/v5,v1/v5,v2/v6,v3/v7,v4/v8} \draw (\from) -- (\to);

\end{tikzpicture}
\caption{A DDS of $P_{4,1}$.}
\label{P4}
\end{figure}

We saw that for two distinct signed graphs $\Sigma_1$ and $\Sigma_2$ having same underlying graph, a DDS of $\Sigma_1$ need not be a DDS of $\Sigma_2$. So in order to get upper bound of DDN of signed generalized Petersen graphs, we will construct DDS of generalized Petersen graphs in such a way that they satisfy condition (ii) of Definition~\ref{def_dd} for all possible signatures of generalized Petersen graphs. 

\begin{figure}[h]
\centering
\begin{tikzpicture}[scale=0.45]
%\draw [help lines] (0,0) grid (85,35);
\node[vertex] (v1) at (0,3.2) {};
\draw [black] (0,3.2) circle (9pt);
\node [below] at (0,4.5) {$u_{0}$};
\node[vertex] (v2) at (0,0) {};
\draw [black] (0,0) circle (9pt);
\node [below] at (0,-0.35) {$v_{0}$};
\node[vertex] (v3) at (3,3.2) {};
\node [below] at (3,4.5) {$u_{1}$};
\node[vertex] (v4) at (3,0) {};
\node [below] at (3,-0.35) {$v_{1}$};
\node[vertex] (v5) at (6,3.2) {};
\draw [black] (6,3.2) circle (9pt);
\node [below] at (6,4.5) {$u_{2}$};
\node[vertex] (v6) at (6,0) {};
\draw [black] (6,0) circle (9pt);
\node [below] at (6,-0.35) {$v_{2}$};
\node[vertex] (v7) at (9,3.2) {};
%\draw [black] (5,1) circle (9pt);
\node [below] at (9,4.5) {$u_{3}$};
\node[vertex] (v8) at (9,0) {};
\node [below] at (9,-0.35) {$v_{3}$};
\node[vertex] (v9) at (-3,3.2) {};
\draw [black] (-3,3.2) circle (9pt);
\node [below] at (-3,4.5) {$u_{2m}$};
\node[vertex] (v10) at (-3,0) {};
\node [below] at (-3,-0.35) {$v_{2m}$};
\node[vertex] (v11) at (-6,3.2) {};
\draw [black] (-6,3.2) circle (9pt);
\node [below] at (-6,4.5) {$u_{2m-1}$};
\node[vertex] (v12) at (-6,0) {};
\node [below] at (-6,-0.35) {$v_{2m-1}$};
\node[vertex] (v13) at (-9,3.2) {};
\draw [black] (-9,3.2) circle (9pt);
\node [below] at (-9,4.5) {$u_{2m-2}$};
\node[vertex] (v14) at (-9,0) {};
\draw [black] (-9,0) circle (9pt);
\node [below] at (-9,-0.35) {$v_{2m-2}$};
\node[vertex] (v15) at (-12,3.2) {};
\node [below] at (-12,4.5) {$u_{2m-3}$};
\node[vertex] (v16) at (-12,0) {};
\node [below] at (-12,-0.35) {$v_{2m-3}$};

\filldraw [black] (0,3.2) circle (3.5pt);
\filldraw [black] (3,3.2) circle (3.5pt);
\filldraw [black] (6,3.2) circle (3.5pt);
\filldraw [black] (9,3.2) circle (3.5pt);
\filldraw [black] (0,0) circle (3.5pt);\filldraw [black] (9,0) circle (3.5pt);
\filldraw [black] (3,0) circle (3.5pt);
\filldraw [black] (6,0) circle (3.5pt);
\filldraw [black] (-3,3.2) circle (3.5pt);
\filldraw [black] (-3,0) circle (3.5pt);
\filldraw [black] (-6,3.2) circle (3.5pt);
\filldraw [black] (-6,0) circle (3.5pt);
\filldraw [black] (-9,3.2) circle (3.5pt);
\filldraw [black] (-9,0) circle (3.5pt);
\filldraw [black] (-12,3.2) circle (3.5pt);
\filldraw [black] (-12,0) circle (3.5pt);

\foreach \from/\to in {v1/v3,v3/v5,v5/v7,v1/v2,v2/v4,v3/v4,v4/v6,v5/v6,v6/v8,v7/v8,v9/v1,v9/v10,v10/v12,v11/v12,v11/v13,v13/v14,v2/v10,v9/v11,v12/v14,v14/v16,v15/v16,v13/v15} \draw (\from) -- (\to);
\draw (9,3.2) -- (12,3.2);
\draw (9,0) -- (12,0);
\draw (-12,3.2) -- (-15,3.2);
\draw (-12,0) -- (-15,0);

\end{tikzpicture}
\caption{A DDS of $P_{2m+1,1}$.}
\label{P_2m+1}
\end{figure}
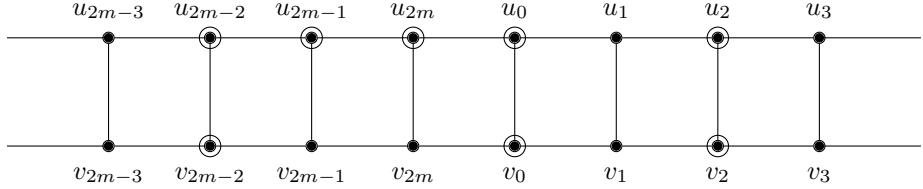

\begin{figure}[h]
\centering
\begin{tikzpicture}[scale=0.45]
%\draw [help lines] (0,0) grid (85,35);
\node[vertex] (v1) at (0,3.2) {};
\draw [black] (0,3.2) circle (9pt);
\node [below] at (0,4.5) {$u_{0}$};
\node[vertex] (v2) at (0,0) {};
\draw [black] (0,0) circle (9pt);
\node [below] at (0,-0.35) {$v_{0}$};
\node[vertex] (v3) at (3,3.2) {};
\node [below] at (3,4.5) {$u_{1}$};
\node[vertex] (v4) at (3,0) {};
\node [below] at (3,-0.35) {$v_{1}$};
\node[vertex] (v5) at (6,3.2) {};
\draw [black] (6,3.2) circle (9pt);
\node [below] at (6,4.5) {$u_{2}$};
\node[vertex] (v6) at (6,0) {};
\draw [black] (6,0) circle (9pt);
\node [below] at (6,-0.35) {$v_{2}$};
\node[vertex] (v7) at (9,3.2) {};
%\draw [black] (5,1) circle (9pt);
\node [below] at (9,4.5) {$u_{3}$};
\node[vertex] (v8) at (9,0) {};
\node [below] at (9,-0.35) {$v_{3}$};
\node[vertex] (v9) at (-3,3.2) {};
\draw [black] (-3,3.2) circle (9pt);
\node [below] at (-3,4.5) {$u_{2m-1}$};
\node[vertex] (v10) at (-3,0) {};
\draw [black] (-3,0) circle (9pt);
\node [below] at (-3,-0.35) {$v_{2m-1}$};
\node[vertex] (v11) at (-6,3.2) {};
\draw [black] (-6,3.2) circle (9pt);
\node [below] at (-6,4.5) {$u_{2m-2}$};
\node[vertex] (v12) at (-6,0) {};
\draw [black] (-6,0) circle (9pt);
\node [below] at (-6,-0.35) {$v_{2m-2}$};
\node[vertex] (v13) at (-9,3.2) {};
\node [below] at (-9,4.5) {$u_{2m-3}$};
\node[vertex] (v14) at (-9,0) {};
\node [below] at (-9,-0.35) {$v_{2m-3}$};

\filldraw [black] (0,3.2) circle (3.5pt);
\filldraw [black] (3,3.2) circle (3.5pt);
\filldraw [black] (6,3.2) circle (3.5pt);
\filldraw [black] (9,3.2) circle (3.5pt);
\filldraw [black] (0,0) circle (3.5pt);\filldraw [black] (9,0) circle (3.5pt);
\filldraw [black] (3,0) circle (3.5pt);
\filldraw [black] (6,0) circle (3.5pt);
\filldraw [black] (-3,3.2) circle (3.5pt);
\filldraw [black] (-3,0) circle (3.5pt);
\filldraw [black] (-6,3.2) circle (3.5pt);
\filldraw [black] (-6,0) circle (3.5pt);
\filldraw [black] (-9,3.2) circle (3.5pt);
\filldraw [black] (-9,0) circle (3.5pt);

\foreach \from/\to in {v1/v3,v3/v5,v5/v7,v1/v2,v2/v4,v3/v4,v4/v6,v5/v6,v6/v8,v7/v8,v9/v1,v9/v10,v10/v12,v11/v12,v11/v13,v13/v14,v2/v10,v9/v11,v12/v14} \draw (\from) -- (\to);
\draw (9,3.2) -- (12,3.2);
\draw (9,0) -- (12,0);
\draw (-9,3.2) -- (-12,3.2);
\draw (-9,0) -- (-12,0);

\end{tikzpicture}
\caption{A DDS of $P_{2m,1}$.}
\label{P_2m}
\end{figure}
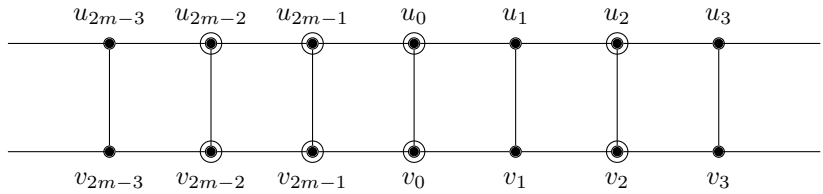

The following two lemmas will be used to get the bounds on the DDN of signed generalized Petersen graphs for $k=1$.

\begin{lem}\label{bound_1}
Let $\Sigma = (P_{2m+1,1}, \sigma)$ be any signed generalized Petersen graph. Then $$2m+1 \leq \gamma_{\times 2}(\Sigma) \leq 2m+2 .$$
\end{lem}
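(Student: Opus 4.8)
The plan is to apply Theorem~\ref{general bounds} directly. The graph $P_{2m+1,1}$ is cubic on $2(2m+1)$ vertices, so setting the parameter of that theorem equal to $2m+1$ gives $\gamma_{\times 2}(\Sigma) \geq 2m+1$ immediately. This disposes of the lower bound with essentially no work, and I would state it in a single sentence. The real content of the lemma is therefore the upper bound, which asserts that one can always do nearly as well as the trivial bound $\gamma_{\times 2}(\Sigma) \leq |V| = 2(2m+1)$ improved by a factor of roughly one half.

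\textbf{Upper bound: constructing a universal DDS.}
For the upper bound I would exhibit an explicit set $D$ of size $2m+2$ and prove it is a DDS for \emph{every} signature $\sigma$. The construction is suggested by Figure~\ref{P_2m+1}: take every second spoke, that is, include both endpoints $u_i,v_i$ of the spoke for $i$ even, together with enough of the odd-indexed outer/inner vertices to repair the parity problem caused by the odd length $2m+1$ of the outer and inner cycles. Concretely one expects $D$ to consist of the $m+1$ ``double'' columns $\{u_{2i},v_{2i}\}$ (the circled vertices in the figure) for $i=0,1,\dots,m$, giving $2(m+1)=2m+2$ vertices. First I would verify condition~(i) of Definition~\ref{def_dd}, namely $|N[v]\cap D|\ge 2$ for every vertex: an outer vertex $u_{2i}\in D$ sees its two outer neighbors $u_{2i\pm1}$ or its spoke-partner, and an outer vertex $u_{2i+1}\notin D$ sees both its neighbors $u_{2i},u_{2i+2}$ which lie in $D$; the symmetric check for inner vertices is identical, and the index wraparound at $2m+1$ (where $u_{2m}$ and $u_0$ are both in $D$) is exactly what the extra column handles. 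This is routine casework on the residue of the index modulo $2$.

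\textbf{Upper bound: the balance condition.}
The step I expect to be the genuine obstacle is condition~(ii): showing $\Sigma[D:V\setminus D]$ is balanced for \emph{all} $\sigma$ simultaneously. By Theorem~\ref{Signature} (equivalently, by the definition of balance), it suffices to show the edge-cut graph $P_{2m+1,1}[D:V\setminus D]$ \emph{contains no cycle at all}, since an acyclic (forest) subgraph is vacuously balanced under every signature. Thus the crux is a purely structural, signature-independent claim: with the chosen $D$, the induced cut $[D:V\setminus D]$ is a forest. I would prove this by describing the cut explicitly --- the vertices of $V\setminus D$ are precisely the odd columns $\{u_{2i+1},v_{2i+1}\}$, and each such vertex has its spoke edge lying \emph{inside} $V\setminus D$ (so not in the cut), while its two outer (resp.\ inner) neighbors lie in $D$; one then checks that the cut consists of disjoint paths and no cycle closes up, the odd length $2m+1$ being precisely what prevents a cut cycle from forming. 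The cleanest way to record this is to observe that every cycle of $P_{2m+1,1}$ (the outer cycle, the inner cycle, and each spoke-square $u_iu_{i+1}v_{i+1}v_i$) meets the cut $[D:V\setminus D]$ in an even number of edges but, more strongly, that no cycle lies entirely within the cut; since the cut is a forest, balance holds trivially for every $\sigma$, completing the proof.
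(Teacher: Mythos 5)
Your proposal is correct and follows essentially the same strategy as the paper: the lower bound is quoted from Theorem~\ref{general bounds}, and for the upper bound you exhibit an explicit $(2m+2)$-element set $D$ and argue that $\Sigma[D:V\setminus D]$ is acyclic, hence balanced under every signature. The only difference is the choice of $D$: you take the $m+1$ full spoke-columns $\{u_{2i},v_{2i}\}$ for $i=0,\dots,m$, whereas the paper takes the columns for $i=0,\dots,m-1$ together with the two extra outer vertices $u_{2m-1},u_{2m}$; in both cases the cut is a disjoint union of two paths (in yours, $u_0u_1\cdots u_{2m}$ and $v_0v_1\cdots v_{2m}$, since no spoke lies in the cut and the edges $u_{2m}u_0$, $v_{2m}v_0$ join two $D$-vertices), so both constructions are valid.
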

\begin{proof}
The lower bound follows from Theorem~\ref{general bounds}.

To get the upper bound, all we need is to construct a DDS of $\Sigma$ that uses $2m+2$ vertices. Consider the set $D=\{u_{2i},v_{2i}~|~i = 0,1,2...,m-1\} \cup \{u_{2m-1},u_{2m}\}$. It is easy to check that $D$ is a DDS of $P_{2m+1,1}$, as illustrated in Figure~\ref{P_2m+1}, and $|D| = 2m+2$. 

To complete the proof, it remains to show that $\Sigma [D:V\setminus D]$ is balanced. Note that \linebreak[4] $\Sigma[D:V\setminus D]$ is the union of two vertex disjoint paths $P_{1}~ \text{and}~ P_{2}$, where $P_{1}=u_{0}u_{1}u_{2}...u_{2m-3}u_{2m-2}$ and $P_{2}=u_{2m}v_{2m}v_{0}v_{1}v_{2}...v_{2m-2}v_{2m-1}u_{2m-1}$. This implies that $\Sigma[D:V \setminus D]$ is acyclic, and so $\Sigma [D:V \setminus D]$ is balanced. Thus $D$ is DDS of $\Sigma$.
\end{proof}

\begin{lem}\label{bound_2}
Let $\Sigma = (P_{2m,1}, \sigma)$ be any signed generalized Petersen graph. Then $$2m \leq \gamma_{\times 2}(\Sigma) \leq 2m+2.$$ Moreover, there exists a signed graph $\Sigma = (P_{2m,1}, \sigma)$, such that $\gamma_{\times 2}(\Sigma) = 2m$.
\end{lem}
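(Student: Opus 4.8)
The plan is to handle the two bounds and the sharpness claim separately. The lower bound is immediate: $P_{2m,1}$ is a cubic graph on $4m$ vertices, so Theorem~\ref{general bounds} (applied with its parameter equal to $2m$) gives $\gamma_{\times 2}(\Sigma) \ge 2m$ for every signature $\sigma$. Everything below concentrates on the upper bound and on the sharpness statement.

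For the upper bound I would exhibit a single vertex set that is a double dominating set simultaneously for every signature $\sigma$, following the strategy announced before Lemma~\ref{bound_1}: make the edge cut acyclic, since an acyclic signed graph has no cycles and is hence vacuously balanced, so condition~(ii) of Definition~\ref{def_dd} holds regardless of $\sigma$. Concretely, take
$$D = \{u_{2i}, v_{2i} \mid i = 0, 1, \ldots, m-1\} \cup \{u_{2m-1}, v_{2m-1}\},$$
as depicted in Figure~\ref{P_2m}, so that $|D| = 2m + 2$. First I would check that $D$ double dominates the underlying graph $P_{2m,1}$: each even-indexed vertex of $D$, say $u_{2i}$, has its spoke-neighbour $v_{2i}$ also in $D$ (and symmetrically for $v_{2i}$), so it is dominated twice; each omitted vertex $u_{2i+1}$ (resp.\ $v_{2i+1}$) has both cycle-neighbours $u_{2i}, u_{2i+2}$ (resp.\ $v_{2i}, v_{2i+2}$) in $D$; and the two extra vertices $u_{2m-1}, v_{2m-1}$ are verified directly. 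Then I would determine the cut: no spoke lies in $\Sigma[D:V\setminus D]$ (the two endpoints of a spoke always share membership status), the outer cut edges form the single path $u_0 u_1 \cdots u_{2m-2}$, and the inner cut edges form the disjoint path $v_0 v_1 \cdots v_{2m-2}$. Thus $\Sigma[D:V\setminus D]$ is a union of two vertex-disjoint paths, hence a forest, hence balanced for every $\sigma$, giving $\gamma_{\times 2}(\Sigma) \le 2m+2$.

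For the sharpness claim I would instead fix the all-positive signature $\sigma$ with $\sigma^{-1}(-) = \emptyset$ and take the smaller set $D' = \{u_{2i}, v_{2i} \mid i = 0, 1, \ldots, m-1\}$, so $|D'| = 2m$. The same local count shows $D'$ double dominates $P_{2m,1}$, and since $|D'| = \tfrac{1}{2}|V(P_{2m,1})|$, Lemma~\ref{2-regular lemma} guarantees that the cut is $2$-regular; indeed here every outer edge and every inner edge is a cut edge, so $\Sigma[D':V\setminus D']$ is the disjoint union of the two $2m$-cycles $C_o$ and $C_i$. Because $\sigma$ is all positive, every cycle of $\Sigma$, in particular these two, is positive, so the cut is balanced and $D'$ is a double dominating set of $\Sigma$. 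Combining $|D'| = 2m$ with the lower bound yields $\gamma_{\times 2}(\Sigma) = 2m$.

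The main obstacle, and the reason the two parts use genuinely different gadgets, is condition~(ii) under the universal quantifier over $\sigma$. For the upper bound we cannot control the signs, so balance must be forced structurally by acyclicity of the cut; but Lemma~\ref{2-regular lemma} shows that \emph{any} half-size double dominating set produces a $2$-regular (hence cyclic) cut, so acyclicity is impossible at $|D| = 2m$ and we are pushed up to $2m+2$. The sharpness direction escapes this tension precisely by surrendering the universal quantifier: once a convenient signature is chosen, the unavoidable cycles of the half-size cut can be made positive, and the all-positive signing achieves this for free. The only routine verifications left are the adjacency counts establishing double domination, which I would carry out case by case on the even, odd, and two special indices.
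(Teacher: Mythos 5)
Your proposal is correct and follows essentially the same route as the paper: the lower bound via Theorem~\ref{general bounds}, the upper bound via the set $D=\{u_{2i},v_{2i} \mid 0\le i\le m-1\}\cup\{u_{2m-1},v_{2m-1}\}$ whose cut is the two disjoint paths $u_0\cdots u_{2m-2}$ and $v_0\cdots v_{2m-2}$, and sharpness via the half-size set whose cut is $C_o\cup C_i$ under a signature making both cycles positive. The only (harmless) differences are that you specialize to the all-positive signature where the paper allows any signature with both cycles positive, and you add the nice observation via Lemma~\ref{2-regular lemma} explaining why a signature-independent set of size $2m$ is impossible.
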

\begin{proof}
From Theorem~\ref{general bounds}, it is obvious that $2m \leq \gamma_{\times 2}(\Sigma)$.
 
To get the upper bound, we need to produce a DDS of $\Sigma$ having $2m+2$ vertices. Consider the set $D=\{u_{2i},v_{2i}~|~i = 0,1,2...,m-1\} \cup \{u_{2m-1},v_{2m-1}\}$, as depicted in Figure~\ref{P_2m}. It is clear that each vertex of $P_{2m,1}$ is dominated at least twice by $D$, and that $|D| = 2m+2$.

Now we show that $\Sigma[D:V \setminus D]$ is balanced. Notice that $[D:V \setminus D] = E(P_{1}) \cup E(P_{2})$, where $P_{1}=u_{0}u_{1}u_{2}...u_{2m-2}$ and $P_{2}=v_{0}v_{1}v_{2}...v_{2m-2}$ are two vertex disjoint paths. Therefore $\Sigma[D:V \setminus D]$ is acyclic, and so $\Sigma[D:V \setminus D]$ is balanced. This shows that $D$ is a DDS of $\Sigma$. Hence $\gamma_{\times 2}(\Sigma) \leq 2m+2$.

 Let $\Sigma=(P_{2m,1}, \sigma)$, where $\sigma$ is any signature such that both the outer cycle $C_{o}$ and the inner cycle $C_{i}$ are positive in $\Sigma$. Consider the set $D=\{u_{2i},v_{2i}~|~i = 0,1,2...,m-1\}$, which is clearly a DDS of $P_{2m,1}$, and that $|D|=2m$. It is easy to see that $\Sigma[D:V \setminus D] = C_{0} \cup C_{i}$. Thus $\Sigma[D:V \setminus D]$ is balanced as $C_{o}~ \text{and}~C_{i}$ are positive in $\Sigma$. Hence $\gamma_{\times 2}(\Sigma) = 2m$, and the proof is complete.
\end{proof}

Lemma~\ref{bound_1} and Lemma~\ref{bound_2} together yield the following theorem. 

\begin{theorem}\label{bound_(n,1)}
Let $\Sigma = (P_{n,1}, \sigma)$ be any signed generalized Petersen graph. Then $$ n \leq \gamma_{\times 2}(\Sigma) \leq 2\big(\lfloor \frac{n}{2} \rfloor +1\big).  $$ 
\end{theorem}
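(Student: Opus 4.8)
The plan is to derive Theorem~\ref{bound_(n,1)} purely as a case-split repackaging of the two preceding lemmas, using the parity of $n$ to unify the two upper bounds into the single expression $2(\lfloor n/2\rfloor + 1)$. First I would observe that the lower bound is immediate: Theorem~\ref{general bounds} applies to \emph{any} signed cubic graph on $2m$ vertices, and $P_{n,1}$ is cubic on $2n$ vertices, so writing $2n = 2m$ gives $m = n$ and hence $\gamma_{\times 2}(\Sigma) \geq n$ regardless of the parity of $n$. This handles the left-hand inequality in one stroke and makes no use of the $k=1$ structure beyond cubicity.

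For the upper bound I would split on whether $n$ is even or odd. If $n$ is odd, write $n = 2m+1$; then Lemma~\ref{bound_1} gives $\gamma_{\times 2}(\Sigma) \leq 2m+2$, and since $\lfloor (2m+1)/2\rfloor = m$ we have $2(\lfloor n/2\rfloor + 1) = 2(m+1) = 2m+2$, so the bound matches exactly. If $n$ is even, write $n = 2m$; then Lemma~\ref{bound_2} gives $\gamma_{\times 2}(\Sigma) \leq 2m+2$, and since $\lfloor 2m/2\rfloor = m$ we again get $2(\lfloor n/2\rfloor + 1) = 2(m+1) = 2m+2$. Thus in both parities the quantity $2(\lfloor n/2\rfloor + 1)$ is precisely the upper bound supplied by the relevant lemma, and the two cases fit together seamlessly.

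The main (and only real) obstacle here is bookkeeping rather than mathematics: one must be careful that the parameter $m$ in the two lemmas is tied correctly to $n$ in each case, so that $\lfloor n/2 \rfloor$ evaluates to the right value and the floor expression genuinely reproduces both lemma bounds. Since both lemmas happen to produce the same numerical upper bound $2m+2$ once $n$ is expressed as $2m$ or $2m+1$, no further estimation is needed; the floor function is simply the device that lets a single formula cover both parities. I would therefore present the proof as a short two-case verification, citing Lemma~\ref{bound_1} for odd $n$ and Lemma~\ref{bound_2} for even $n$, and confirming in each case that $2\big(\lfloor n/2\rfloor + 1\big)$ equals the stated lemma bound, with the lower bound following uniformly from Theorem~\ref{general bounds}.
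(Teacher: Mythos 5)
Your proposal is correct and matches the paper's own derivation: the paper obtains this theorem directly by combining Lemma~\ref{bound_1} (odd $n=2m+1$) and Lemma~\ref{bound_2} (even $n=2m$), with the floor expression $2\big(\lfloor \frac{n}{2}\rfloor+1\big)$ evaluating to $2m+2$ in both cases and the lower bound coming from Theorem~\ref{general bounds}. Your parity bookkeeping is exactly the intended argument.
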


\begin{figure}[h]
\begin{center}
\begin{tikzpicture}[scale=0.55]

		\node[vertex] (v5) at ({360/17*0 }:7cm) {};
		\node[vertex] (v4) at ({360/17*1 }:7cm) {};
		\node[vertex] (v3) at ({360/17 *2 }:7cm) {};
		\node[vertex] (v2) at ({360/17 *3 }:7cm) {};
		\node[vertex] (v1) at ({360/17 *4 }:7cm) {};
		\node[vertex] (v17) at ({360/17 *5}:7cm) {};
		\node[vertex] (v16) at ({360/17 *6 }:7cm) {};
		\node[vertex] (v15) at ({360/17 *7}:7cm) {};
		\node[vertex] (v14) at ({360/17 *8}:7cm) {};
		\node[vertex] (v13) at ({360/17 *9}:7cm) {};
		\node[vertex] (v12) at ({360/17 *10}:7cm) {};
		\node[vertex] (v11) at ({360/17 *11}:7cm) {};
        \node[vertex] (v10) at ({360/17 *12}:7cm) {};
        \node[vertex] (v9) at ({360/17 *13}:7cm) {};
        \node[vertex] (v8) at ({360/17 *14}:7cm) {};
        \node[vertex] (v7) at ({360/17 *15}:7cm) {};
        \node[vertex] (v6) at ({360/17 *16}:7cm) {};
        \draw ({360/17 *0 +1.5}:7cm) arc  ({360/17*0+1.5}:{360/17*1 -1.5 }:7cm);
        \draw ({360/17 *1 +1.5}:7cm) arc  ({360/17*1+1.5}:{360/17*2 -1.5 }:7cm);
        \draw ({360/17 *2 +1.5}:7cm) arc  ({360/17*2+1.5}:{360/17*3 -1.5 }:7cm);
        \draw ({360/17 *3 +1.5}:7cm) arc  ({360/17*3+1.5}:{360/17*4 -1.5 }:7cm);
        \draw ({360/17 *4 +1.5}:7cm) arc  ({360/17*4+1.5}:{360/17*5 -1.5 }:7cm);
        \draw ({360/17 *5 +1.5}:7cm) arc  ({360/17*5+1.5}:{360/17*6 -1.5 }:7cm);
        \draw ({360/17 *6 +1.5}:7cm) arc  ({360/17*6+1.5}:{360/17*7 -1.5 }:7cm);
        \draw ({360/17 *7 +1.5}:7cm) arc  ({360/17*7+1.5}:{360/17*8 -1.5 }:7cm);
        \draw ({360/17 *8 +1.5}:7cm) arc  ({360/17*8+1.5}:{360/17*9 -1.5 }:7cm);
        \draw ({360/17 *9 +1.5}:7cm) arc  ({360/17*9+1.5}:{360/17*10 -1.5 }:7cm);
        \draw ({360/17 *10 +1.5}:7cm) arc  ({360/17*10+1.5}:{360/17*11 -1.5 }:7cm);
        \draw ({360/17 *11+1.5}:7cm) arc  ({360/17*11+1.5}:{360/17*12 -1.5 }:7cm);
        \draw ({360/17 *12 +1.5}:7cm) arc  ({360/17*12+1.5}:{360/17*13 -1.5 }:7cm);
        \draw ({360/17 *13 +1.5}:7cm) arc  ({360/17*13+1.5}:{360/17*14 -1.5 }:7cm);
        \draw ({360/17 *14 +1.5}:7cm) arc  ({360/17*14+1.5}:{360/17*15 -1.5 }:7cm);
        \draw ({360/17 *15 +1.5}:7cm) arc  ({360/17*15+1.5}:{360/17*16 -1.5 }:7cm);
        \draw ({360/17 *16 +1.5}:7cm) arc  ({360/17*16+1.5}:{360/17*17 -1.5 }:7cm);
        
        \node[xshift=0.4cm, yshift=-0.05cm] at ({360/17*0 }:7cm) {$u_{4}$};
		\node[xshift=0.4cm] at ({360/17*1 }:7cm) {$u_{3}$};
		\node[xshift=0.25cm, yshift=0.2cm] at ({360/17 *2 }:7cm) {$u_{2}$};
		\node[xshift=0.15cm,yshift=0.25cm] at ({360/17 *3 }:7cm) {$u_{1}$};
		\node[ yshift=0.3cm] at ({360/17 *4 }:7cm) {$u_{0}$};
		\node[xshift=-0.15cm, yshift=0.3cm] at ({360/17 *5}:7cm) {$u_{16}$};
		\node[xshift=-0.3cm, yshift=0.25cm] at ({360/17 *6 }:7cm) {$u_{15}$};
		\node[xshift=-0.4cm, yshift=0.25cm] at ({360/17 *7}:7cm) {$u_{14}$};
		\node[xshift=-0.45cm] at ({360/17 *8}:7cm) {$u_{13}$};
		\node[xshift=-0.45cm,  yshift=-0.15cm] at ({360/17 *9}:7cm) {$u_{12}$};
		\node[xshift=-0.3cm, yshift=-0.25cm] at ({360/17 *10}:7cm) {$u_{11}$};
		\node[xshift=-0.1cm, yshift=-0.35cm] at ({360/17 *11}:7cm) {$u_{10}$};
        \node[yshift=-0.35cm] at ({360/17 *12}:7cm) {$u_{9}$};
        \node[xshift=0.1cm, yshift=-0.35cm] at ({360/17 *13}:7cm) {$u_{8}$};
  \node[xshift=0.2cm, yshift=-0.3cm] at ({360/17 *14}:7cm) {$u_{7}$};
  \node[xshift=0.35cm, yshift=-0.25cm] at ({360/17 *15}:7cm) {$u_{6}$};
  \node[xshift=0.35cm, yshift=-0.15cm] at ({360/17 *16}:7cm) {$u_{5}$};
  
  \filldraw[black] ({360/17*0 }:7cm) circle (3pt);
  \filldraw [black] ({360/17 *1}:7cm) circle (3pt);
  \filldraw [black] ({360/17 *2}:7cm) circle (3pt);
  \filldraw [black] ({360/17 *3}:7cm) circle (3pt);
  \filldraw [black] ({360/17 *4}:7cm) circle (3pt);
  \filldraw [black] ({360/17 *5}:7cm) circle (3pt);
  \filldraw [black] ({360/17 *6}:7cm) circle (3pt);
  \filldraw [black] ({360/17 *7}:7cm) circle (3pt);
  \filldraw [black] ({360/17 *8}:7cm) circle (3pt);
  \filldraw [black] ({360/17 *9}:7cm) circle (3pt);
  \filldraw [black] ({360/17 *10}:7cm) circle (3pt);
  \filldraw [black] ({360/17 *11}:7cm) circle (3pt);
  \filldraw [black] ({360/17 *12}:7cm) circle (3pt);
  \filldraw [black] ({360/17 *13}:7cm) circle (3pt);
  \filldraw [black] ({360/17 *14}:7cm) circle (3pt);
  \filldraw [black] ({360/17 *15}:7cm) circle (3pt); 
  \filldraw [black] ({360/17 *16}:7cm) circle (3pt); 
  
  \draw [black] ({360/17*0 }:7cm) circle (7pt);
  \draw [black] ({360/17 *1}:7cm) circle (7pt);
  \draw [black] ({360/17 *2}:7cm) circle (7pt);
  \draw [black] ({360/17 *3}:7cm) circle (7pt);
  \draw [black] ({360/17 *4}:7cm) circle (7pt);
  \draw [black] ({360/17 *5}:7cm) circle (7pt);
  \draw [black] ({360/17 *6}:7cm) circle (7pt);
  \draw [black] ({360/17 *7}:7cm) circle (7pt);
  \draw [black] ({360/17 *8}:7cm) circle (7pt);
  \draw [black] ({360/17 *9}:7cm) circle (7pt);
  \draw [black] ({360/17 *10}:7cm) circle (7pt);
  \draw [black] ({360/17 *11}:7cm) circle (7pt);
  \draw [black] ({360/17 *12}:7cm) circle (7pt);
  \draw [black] ({360/17 *13}:7cm) circle (7pt);
  \draw [black] ({360/17 *14}:7cm) circle (7pt);
  \draw [black] ({360/17 *15}:7cm) circle (7pt);
  \draw [black] ({360/17 *16}:7cm) circle (7pt);
    
		\node[vertex] (v22) at ({360/17*0 }:5.5cm) {};
		\node[vertex] (v21) at ({360/17*1 }:5.5cm) {};
		\node[vertex] (v20) at ({360/17 *2 }:5.5cm) {};
		\node[vertex] (v19) at ({360/17 *3 }:5.5cm) {};
		\node[vertex] (v18) at ({360/17 *4 }:5.5cm) {};
		\node[vertex] (v34) at ({360/17 *5}:5.5cm) {};
		\node[vertex] (v33) at ({360/17 *6 }:5.5cm) {};
		\node[vertex] (v32) at ({360/17 *7}:5.5cm) {};
		\node[vertex] (v31) at ({360/17 *8}:5.5cm) {};
		\node[vertex] (v30) at ({360/17 *9}:5.5cm) {};
		\node[vertex] (v29) at ({360/17 *10}:5.5cm) {};
		\node[vertex] (v28) at ({360/17 *11}:5.5cm) {};
        \node[vertex] (v27) at ({360/17 *12}:5.5cm) {};
        \node[vertex] (v26) at ({360/17 *13}:5.5cm) {};
        \node[vertex] (v25) at ({360/17 *14}:5.5cm) {};
        \node[vertex] (v24) at ({360/17 *15}:5.5cm) {};
        \node[vertex] (v23) at ({360/17 *16}:5.5cm) {};
        
       \filldraw[black] ({360/17*0 }:5.5cm) circle (3pt);
  \filldraw [black] ({360/17 *1}:5.5cm) circle (3pt);
  \filldraw [black] ({360/17 *2}:5.5cm) circle (3pt);
  \filldraw [black] ({360/17 *3}:5.5cm) circle (3pt);
  \filldraw [black] ({360/17 *4}:5.5cm) circle (3pt);
  \filldraw [black] ({360/17 *5}:5.5cm) circle (3pt);
  \filldraw [black] ({360/17 *6}:5.5cm) circle (3pt);
  \filldraw [black] ({360/17 *7}:5.5cm) circle (3pt);
  \filldraw [black] ({360/17 *8}:5.5cm) circle (3pt);
  \filldraw [black] ({360/17 *9}:5.5cm) circle (3pt);
  \filldraw [black] ({360/17 *10}:5.5cm) circle (3pt);
  \filldraw [black] ({360/17 *11}:5.5cm) circle (3pt);
  \filldraw [black] ({360/17 *12}:5.5cm) circle (3pt);
  \filldraw [black] ({360/17 *13}:5.5cm) circle (3pt);
  \filldraw [black] ({360/17 *14}:5.5cm) circle (3pt);
  \filldraw [black] ({360/17 *15}:5.5cm) circle (3pt); 
  \filldraw [black] ({360/17 *16}:5.5cm) circle (3pt);         
        
        \draw ({360/17 *0 +1.5}:5.5cm) ;
        \draw ({360/17 *1 +1.5}:5.5cm) ;
        \draw ({360/17 *2 +1.5}:5.5cm) ;
        \draw ({360/17 *3 +1.5}:5.5cm) ;
        \draw ({360/17 *4 +1.5}:5.5cm) ;
        \draw ({360/17 *5 +1.5}:5.5cm) ;
        \draw ({360/17 *6 +1.5}:5.5cm) ;
        \draw ({360/17 *7 +1.5}:5.5cm) ;
        \draw ({360/17 *8 +1.5}:5.5cm) ;
        \draw ({360/17 *9 +1.5}:5.5cm) ;
        \draw ({360/17 *10 +1.5}:5.5cm) ;
        \draw ({360/17 *11+1.5}:5.5cm) ;
        \draw ({360/17 *12 +1.5}:5.5cm) ;
        \draw ({360/17 *13 +1.5}:5.5cm) ;
        \draw ({360/17 *14 +1.5}:5.5cm) ;
        \draw ({360/17 *15 +1.5}:5.5cm) ;
        \draw ({360/17 *16 +1.5}:5.5cm) ;
        
   \draw [black] ({360/17 *1}:5.5cm) circle (7pt);
  \draw [black] ({360/17 *2}:5.5cm) circle (7pt);
  \draw [black] ({360/17 *15}:5.5cm) circle (7pt);  
  \draw [black] ({360/17 *14}:5.5cm) circle (7pt); 
  \draw [black] ({360/17 *11}:5.5cm) circle (7pt);  
  \draw [black] ({360/17 *10}:5.5cm) circle (7pt); 
  \draw [black] ({360/17 *7}:5.5cm) circle (7pt);  
  \draw [black] ({360/17 *6}:5.5cm) circle (7pt);     
        
      \node[xshift=0.15cm, yshift=0.25cm] at ({360/17*0 }:5.5cm) {$v_{4}$};
		\node[xshift=0.05cm, yshift=0.25cm] at ({360/17*1 }:5.5cm) {$v_{3}$};
		\node[xshift=-0.05cm, yshift=0.25cm] at ({360/17 *2 }:5.5cm) {$v_{2}$};
		\node[xshift=-0.2cm, yshift=0.25cm] at ({360/17 *3 }:5.5cm) {$v_{1}$};
		\node[ xshift=-0.3cm, yshift=0.2cm] at ({360/17 *4 }:5.5cm) {$v_{0}$};
		\node[xshift=-0.35cm, yshift=0.1cm] at ({360/17 *5}:5.5cm) {$v_{16}$};
		\node[xshift=-0.4cm, yshift=0.05cm] at ({360/17 *6 }:5.5cm) {$v_{15}$};
		\node[xshift=-0.35cm, yshift=-0.15cm] at ({360/17 *7}:5.5cm) {$v_{14}$};
		\node[xshift=-0.25cm, yshift=-0.3cm] at ({360/17 *8}:5.5cm) {$v_{13}$};
		\node[xshift=-0.15cm, yshift=-0.35cm] at ({360/17 *9}:5.5cm) {$v_{12}$};
		\node[xshift=0.1cm, yshift=-0.35cm] at ({360/17 *10}:5.5cm) {$v_{11}$};
		\node[xshift=0.2cm, yshift=-0.35cm] at ({360/17 *11}:5.5cm) {$v_{10}$};
        \node[xshift=0.25cm, yshift=-0.25cm] at ({360/17 *12}:5.5cm) {$v_{9}$};
       \node[xshift=0.35cm, yshift=-0.15cm] at ({360/17 *13}:5.5cm) {$v_{8}$};
  \node[xshift=0.35cm, yshift=-0.1cm] at ({360/17 *14}:5.5cm) {$v_{7}$};
  \node[xshift=0.4cm, yshift=-0.05cm] at ({360/17 *15}:5.5cm) {$v_{6}$};
  \node[xshift=0.3cm, yshift=0.1cm] at ({360/17 *16}:5.5cm) {$v_{5}$};    

       \foreach \from/\to in {v1/v18,v2/v19,v3/v20,v4/v21,v5/v22,v6/v23,v7/v24,v8/v25,v9/v26,v10/v27,v11/v28,v12/v29,v13/v30,v14/v31,v15/v32,v16/v33,v17/v34,v18/v20,v19/v21,v20/v22,v21/v23,v22/v24,v23/v25,v24/v26,v25/v27,v26/v28,v27/v29,v28/v30,v29/v31,v30/v32,v31/v33,v32/v34,v33/v18,v34/v19} \draw (\from) -- (\to);
        
	\end{tikzpicture}
\caption{An example for the upper bound of Lemma~\ref{bound_3}: a DDS of $P_{17,2}$.}\label{P_17,2}
\end{center}
\end{figure}

We will use the following two lemmas to get the bounds on the DDN of signed generalized Petersen graph $(P_{n,k}, \sigma)$, where $\gcd(n,k) = 1~\text{and}~k\geq 2$.

\begin{lem}\label{bound_3}
Let $\Sigma = (P_{n,k}, \sigma)$ be any signed generalized Petersen graph, where $\gcd(n,k) =1~\text{and}~k\geq2$. Let $\lceil \frac{n}{k} \rceil = 2m+1$, for some $m\geq1$. Then $ n \leq \gamma_{\times 2}(\Sigma) \leq n+mk$.
\end{lem}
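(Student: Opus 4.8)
The lower bound $n \le \gamma_{\times 2}(\Sigma)$ is immediate from Theorem~\ref{general bounds}, since $P_{n,k}$ is a cubic graph on $2n$ vertices. The whole content is therefore the upper bound, and my plan is to exhibit a single explicit set $D$ with $|D| = n + mk$ that is a double dominating set \emph{simultaneously for every} signature $\sigma$. Exactly as in Lemmas~\ref{bound_1} and~\ref{bound_2}, the cleanest way to secure condition (ii) of Definition~\ref{def_dd} uniformly in $\sigma$ is to make $\Sigma[D:V\setminus D]$ acyclic: an acyclic signed graph has no cycles, hence is vacuously balanced for every $\sigma$. So the target reduces to producing a $D$ of size $n+mk$ that (a) double dominates the underlying graph $P_{n,k}$ and (b) has an acyclic edge cut $[D:V\setminus D]$.

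For the construction I would put \emph{all} outer vertices into $D$ together with $m$ evenly spaced blocks of $k$ consecutive inner vertices: set $D = U \cup \bigcup_{j=0}^{m-1} B_j$, where $B_j = \{\, v_i : (2j+1)k \le i \le (2j+2)k-1 \,\}$, so that $|D| = n + mk$. (For $P_{17,2}$, i.e. $m=4,k=2$, this recovers the set $\{v_2,v_3,v_6,v_7,v_{10},v_{11},v_{14},v_{15}\}$ of Figure~\ref{P_17,2}.) Writing $D_v = D \cap V_v = \bigcup_j B_j$, I would first check double domination. Each outer vertex lies in $D$ and has both of its cycle-neighbours on $C_o$ in $D$, so it is dominated at least twice; each $v_i \in D_v$ has its spoke-neighbour $u_i \in D$ and itself, giving two. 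The only real verification is that every $v_i \notin D_v$ has, besides $u_i \in D$, a $D_v$-neighbour on the inner cycle $C_i$. Using $\lceil n/k\rceil = 2m+1$, hence $n > 2mk$, the non-$D_v$ indices split into the ``gaps'' $G_j = [2jk,(2j+1)k-1]$ and the ``tail'' $T = [2mk, n-1]$; a routine check shows $v_{i+k}\in B_j$ for $v_i \in G_j$ and $v_{i-k}\in B_{m-1}$ for $v_i \in T$, so $D_v$ dominates $C_i$ and $D$ double dominates $P_{n,k}$.

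The heart of the argument — and the step I expect to be the main obstacle — is establishing acyclicity of the cut, where the hypothesis $\gcd(n,k)=1$ is essential. Here I would argue as follows. Every outer vertex $u_i$ meets the cut in at most the single spoke $u_iv_i$, so the $u_i$ are pendant (degree $\le 1$) in $\Sigma[D:V\setminus D]$ and cannot lie on any cycle; hence every cycle of the cut would use only inner cut edges. Because $\gcd(n,k)=1$, the inner vertices induce a \emph{single} cycle $C_i$, so the inner cut edges form a subgraph of $C_i$, and a proper subset of the edges of one cycle is a disjoint union of paths, i.e. a forest. Thus it suffices to exhibit one inner edge whose \emph{both} ends lie outside $D_v$. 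Since $\gcd(n,k)=1$ and $k\ge 2$ force $n \ne (2m+1)k$, we have $2mk < n < (2m+1)k$, and I would point to the edge $v_{2mk}\,v_{(2m+1)k}$ of $C_i$: its first end $v_{2mk}$ lies in the tail $T$, while $(2m+1)k \equiv (2m+1)k - n \pmod n$ with $(2m+1)k-n \in \{1,\dots,k-1\} = G_0$, so both ends avoid $D_v$. This single non-cut edge shows the inner cut edges are a proper subset of $E(C_i)$, hence acyclic, so $\Sigma[D:V\setminus D]$ is a forest and therefore balanced for every $\sigma$. Consequently $D$ is a double dominating set of $\Sigma$ for all signatures and $\gamma_{\times 2}(\Sigma) \le n + mk$, completing the proof.
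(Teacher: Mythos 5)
Your proposal is correct and follows essentially the same route as the paper: the set $D = U \cup \bigcup_{j} B_j$ is exactly the paper's $U \cup \left(\cup_{i=1}^{m} V_{2i}\right)$, the double-domination check is the same block-by-block argument, and the balance argument is the same reduction (outer vertices are pendant in the cut, $\gcd(n,k)=1$ gives a unique inner cycle, and one inner edge with both ends outside $D$ breaks it), differing only in the choice of witness edge ($v_{2mk}v_{(2m+1)k}$ versus the paper's $v_{n-1}v_{k-1}$). Your version is in fact slightly more careful than the paper's in spelling out why $n < (2m+1)k$ and why a proper edge-subset of a cycle is a forest.
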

\begin{proof}
Recall that $U$ denotes the set of $u\text{-vertices}$ and $V_v$ denotes the set of $v\text{-vertices}$ of $P_{n,k}$. Clearly $|U| = |V_{v}| = n$. Let $V_{1},V_{2},...,V_{2m},V_{2m+1}$ be a partition of the set $V_v$ such that \linebreak[4] $V_{i}= \{v_{(i-1)k},v_{(i-1)k+1},...v_{(i-1)k+(k-1)}\}$ for $1\leq i\leq 2m$ and $V_{2m+1} = V_v- \cup_{i=1}^{2m}V_i$. For each $1 \leq i \leq 2m$, it is obvious that $|V_{i}| = k$. Thus we have $|V_{2m+1}| = n-2mk$.

To get the upper bound, we take the set $D= U \cup \left(\cup_{i=1}^{m} V_{2i}\right)$. For example, see Figure~\ref{P_17,2}. It is clear that $|D|=n+km$. As the cycle $C_o$ lies completely inside $G[D]$, where $G = P_{n,k}$, every $u\text{-vertex}$ is dominated at least twice by $D$. Also for each $v\text{-vertex}$, the corresponding neighbor $u\text{-vertex}$ is in $D$. We show that each vertex of $V_v$ is either in $D$ or adjacent to at least one $v\text{-vertex}$ in $D$. Clearly each vertex of $\cup_{i=1}^{m} V_{2i}$ is in $D$. Further for $1\leq i \leq m$, each vertex of $V_{2i-1}$ is adjacent to a vertex of $V_{2i}$ and $V_{2i} \subseteq D$. Thus each vertex of $\cup_{i=1}^{m} V_{2i-1}$ is adjacent to a $v\text{-vertex}$ in $D$. Also each vertex $V_{2m+1}$ is adjacent to a vertex of $V_{2m}$ and $V_{2m} \subseteq D$. Thus each vertex of $V_{2m+1}$ is also adjacent to a $v\text{-vertex}$ in $D$. This shows that $D$ is a DDS of $P_{n,k}$.

Now we show that $\Sigma[D:V \setminus D]$ is balanced. To prove this, it is enough to show that $\Sigma[D:V \setminus D]$ is acyclic. Note that the vertex $u_{i}$ is adjacent to at most one vertex of $V \setminus D$, since $C_{o}$ lies in $G[D]$. Thus if $\Sigma[D:V \setminus D]$ contains any cycle then all the vertices of that cycle must be $v\text{-vertices}$ only. But the graph $P_{n,k}$ has only one inner cycle, say $C_i$, induced by $v\text{-vertices}$ as $\gcd(n,k) =1$. Therefore if $\Sigma[D:V \setminus D]$ contains a cycle then that cycle must be the inner cycle $C_{i}$ itself. Note that the vertices $v_{k-1}~\text{and}~v_{n-1}$ are adjacent and both belong to $V \setminus D$. Thus $\Sigma[D:V \setminus D]$ cannot contain a cycle. Therefore $\Sigma [D:V \setminus D]$ is balanced. This implies that $\gamma_{\times 2}(\Sigma) \leq n+mk$.

 The lower bound follows from Theorem~\ref{general bounds}, and the proof is complete.
\end{proof}

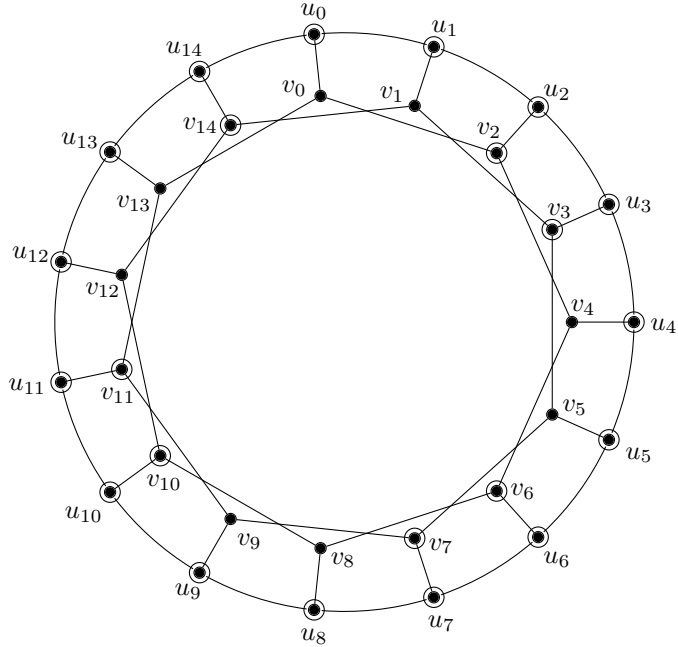
\begin{figure}[h]
\begin{center}
\begin{tikzpicture}[scale=0.55]

		\node[vertex] (v5) at ({360/15*0 }:7cm) {};
		\node[vertex] (v4) at ({360/15*1 }:7cm) {};
		\node[vertex] (v3) at ({360/15 *2 }:7cm) {};
		\node[vertex] (v2) at ({360/15 *3 }:7cm) {};
		\node[vertex] (v1) at ({360/15 *4 }:7cm) {};
		\node[vertex] (v15) at ({360/15 *5}:7cm) {};
		\node[vertex] (v14) at ({360/15 *6 }:7cm) {};
		\node[vertex] (v13) at ({360/15 *7}:7cm) {};
		\node[vertex] (v12) at ({360/15 *8}:7cm) {};
		\node[vertex] (v11) at ({360/15 *9}:7cm) {};
		\node[vertex] (v10) at ({360/15 *10}:7cm) {};
		\node[vertex] (v9) at ({360/15 *11}:7cm) {};
        \node[vertex] (v8) at ({360/15 *12}:7cm) {};
        \node[vertex] (v7) at ({360/15 *13}:7cm) {};
        \node[vertex] (v6) at ({360/15 *14}:7cm) {};
        \draw ({360/15 *0 +1.5}:7cm) arc  ({360/15*0+1.5}:{360/15*1 -1.5 }:7cm);
        \draw ({360/15 *1 +1.5}:7cm) arc  ({360/15*1+1.5}:{360/15*2 -1.5 }:7cm);
        \draw ({360/15 *2 +1.5}:7cm) arc  ({360/15*2+1.5}:{360/15*3 -1.5 }:7cm);
        \draw ({360/15 *3 +1.5}:7cm) arc  ({360/15*3+1.5}:{360/15*4 -1.5 }:7cm);
        \draw ({360/15 *4 +1.5}:7cm) arc  ({360/15*4+1.5}:{360/15*5 -1.5 }:7cm);
        \draw ({360/15*5 +1.5}:7cm) arc  ({360/15*5+1.5}:{360/15*6 -1.5 }:7cm);
        \draw ({360/15 *6 +1.5}:7cm) arc  ({360/15*6+1.5}:{360/15*7 -1.5 }:7cm);
        \draw ({360/15*7 +1.5}:7cm) arc  ({360/15*7+1.5}:{360/15*8 -1.5 }:7cm);
        \draw ({360/15 *8 +1.5}:7cm) arc  ({360/15*8+1.5}:{360/15*9 -1.5 }:7cm);
        \draw ({360/15 *9 +1.5}:7cm) arc  ({360/15*9+1.5}:{360/15*10 -1.5 }:7cm);
        \draw ({360/15 *10 +1.5}:7cm) arc  ({360/15*10+1.5}:{360/15*11 -1.5 }:7cm);
        \draw ({360/15 *11+1.5}:7cm) arc  ({360/15*11+1.5}:{360/15*12 -1.5 }:7cm);
        \draw ({360/15 *12 +1.5}:7cm) arc  ({360/15*12+1.5}:{360/15*13 -1.5 }:7cm);
        \draw ({360/15 *13 +1.5}:7cm) arc  ({360/15*13+1.5}:{360/15*14 -1.5 }:7cm);
        \draw ({360/15 *14 +1.5}:7cm) arc  ({360/15*14+1.5}:{360/15*15 -1.5 }:7cm);
        
        \node[xshift=0.4cm, yshift=-0.05cm] at ({360/15*0 }:7cm) {$u_{4}$};
		\node[xshift=0.4cm] at ({360/15*1 }:7cm) {$u_{3}$};
		\node[xshift=0.25cm, yshift=0.2cm] at ({360/15 *2 }:7cm) {$u_{2}$};
		\node[xshift=0.15cm,yshift=0.25cm] at ({360/15 *3 }:7cm) {$u_{1}$};
		\node[ yshift=0.3cm] at ({360/15 *4 }:7cm) {$u_{0}$};
		\node[xshift=-0.2cm, yshift=0.3cm] at ({360/15 *5}:7cm) {$u_{14}$};
		\node[xshift=-0.4cm, yshift=0.15cm] at ({360/15 *6 }:7cm) {$u_{13}$};
		\node[xshift=-0.4cm, yshift=0.1cm] at ({360/15 *7}:7cm) {$u_{12}$};
		\node[xshift=-0.45cm, yshift=-0.05cm] at ({360/15 *8}:7cm) {$u_{11}$};
		\node[xshift=-0.35cm,  yshift=-0.3cm] at ({360/15 *9}:7cm) {$u_{10}$};
		\node[xshift=-0.15cm, yshift=-0.25cm] at ({360/15 *10}:7cm) {$u_{9}$};
		\node[yshift=-0.35cm] at ({360/15 *11}:7cm) {$u_{8}$};
        \node[xshift=0.1cm, yshift=-0.35cm] at ({360/15 *12}:7cm) {$u_{7}$};
        \node[xshift=0.25cm, yshift=-0.3cm] at ({360/15 *13}:7cm) {$u_{6}$};
  \node[xshift=0.4cm, yshift=-0.2cm] at ({360/15 *14}:7cm) {$u_{5}$};
  
  \filldraw [black] ({360/15 *0}:7cm) circle (3pt);
  \filldraw [black] ({360/15 *1}:7cm) circle (3pt);
  \filldraw [black] ({360/15 *2}:7cm) circle (3pt);
  \filldraw [black] ({360/15 *3}:7cm) circle (3pt);
  \filldraw [black] ({360/15 *4}:7cm) circle (3pt);
  \filldraw [black] ({360/15 *5}:7cm) circle (3pt);
  \filldraw [black] ({360/15 *6}:7cm) circle (3pt);
  \filldraw [black] ({360/15 *7}:7cm) circle (3pt);
  \filldraw [black] ({360/15 *8}:7cm) circle (3pt);
  \filldraw [black] ({360/15 *9}:7cm) circle (3pt);
  \filldraw [black] ({360/15 *10}:7cm) circle (3pt);
  \filldraw [black] ({360/15 *11}:7cm) circle (3pt);
  \filldraw [black] ({360/15 *12}:7cm) circle (3pt);
  \filldraw [black] ({360/15 *13}:7cm) circle (3pt);
  \filldraw [black] ({360/15 *14}:7cm) circle (3pt);
  
  \draw [black] ({360/15*0 }:7cm) circle (7pt);
  \draw [black] ({360/15 *1}:7cm) circle (7pt);
  \draw [black] ({360/15 *2}:7cm) circle (7pt);
  \draw [black] ({360/15 *3}:7cm) circle (7pt);
  \draw [black] ({360/15 *4}:7cm) circle (7pt);
  \draw [black] ({360/15 *5}:7cm) circle (7pt);
  \draw [black] ({360/15 *6}:7cm) circle (7pt);
  \draw [black] ({360/15 *7}:7cm) circle (7pt);
  \draw [black] ({360/15 *8}:7cm) circle (7pt);
  \draw [black] ({360/15 *9}:7cm) circle (7pt);
  \draw [black] ({360/15 *10}:7cm) circle (7pt);
  \draw [black] ({360/15 *11}:7cm) circle (7pt);
  \draw [black] ({360/15 *12}:7cm) circle (7pt);
  \draw [black] ({360/15 *13}:7cm) circle (7pt);
  \draw [black] ({360/15 *14}:7cm) circle (7pt);
  
  \node[vertex] (v20) at ({360/15*0 }:5.5cm) {};
  \node[vertex] (v19) at ({360/15*1 }:5.5cm) {};
  \node[vertex] (v18) at ({360/15*2 }:5.5cm) {};
  \node[vertex] (v17) at ({360/15*3 }:5.5cm) {};
  \node[vertex] (v16) at ({360/15*4 }:5.5cm) {};
  \node[vertex] (v30) at ({360/15*5 }:5.5cm) {};
  \node[vertex] (v29) at ({360/15*6 }:5.5cm) {};
  \node[vertex] (v28) at ({360/15*7 }:5.5cm) {};
  \node[vertex] (v27) at ({360/15*8 }:5.5cm) {};
  \node[vertex] (v26) at ({360/15*9 }:5.5cm) {};
  \node[vertex] (v25) at ({360/15*10 }:5.5cm) {};
  \node[vertex] (v24) at ({360/15*11 }:5.5cm) {};
  \node[vertex] (v23) at ({360/15*12 }:5.5cm) {};
  \node[vertex] (v22) at ({360/15*13 }:5.5cm) {};
  \node[vertex] (v21) at ({360/15*14 }:5.5cm) {};
  
    \draw ({360/15 *0 +1.5}:5.5cm) ;
    \draw ({360/15 *1 +1.5}:5.5cm) ;
    \draw ({360/15 *2 +1.5}:5.5cm) ;
    \draw ({360/15 *3 +1.5}:5.5cm) ;
    \draw ({360/15 *4 +1.5}:5.5cm) ;
    \draw ({360/15 *5 +1.5}:5.5cm) ;
    \draw ({360/15 *6 +1.5}:5.5cm) ;
    \draw ({360/15 *7 +1.5}:5.5cm) ;
    \draw ({360/15 *8 +1.5}:5.5cm) ;
    \draw ({360/15 *9 +1.5}:5.5cm) ;
    \draw ({360/15 *10 +1.5}:5.5cm) ;
    \draw ({360/15 *11 +1.5}:5.5cm) ;
    \draw ({360/15 *12 +1.5}:5.5cm) ;
    \draw ({360/15 *13 +1.5}:5.5cm) ;
    \draw ({360/15 *14 +1.5}:5.5cm) ;
    
  \filldraw [black] ({360/15 *0}:5.5cm) circle (3pt);
  \filldraw [black] ({360/15 *1}:5.5cm) circle (3pt);
  \filldraw [black] ({360/15 *2}:5.5cm) circle (3pt);
  \filldraw [black] ({360/15 *3}:5.5cm) circle (3pt);
  \filldraw [black] ({360/15 *4}:5.5cm) circle (3pt);
  \filldraw [black] ({360/15 *5}:5.5cm) circle (3pt);
  \filldraw [black] ({360/15 *6}:5.5cm) circle (3pt);
  \filldraw [black] ({360/15 *7}:5.5cm) circle (3pt);
  \filldraw [black] ({360/15 *8}:5.5cm) circle (3pt);
  \filldraw [black] ({360/15 *9}:5.5cm) circle (3pt);
  \filldraw [black] ({360/15 *10}:5.5cm) circle (3pt);
  \filldraw [black] ({360/15 *11}:5.5cm) circle (3pt);
  \filldraw [black] ({360/15 *12}:5.5cm) circle (3pt);
  \filldraw [black] ({360/15 *13}:5.5cm) circle (3pt);
  \filldraw [black] ({360/15 *14}:5.5cm) circle (3pt);
  
  \draw [black] ({360/15*2 }:5.5cm) circle (7pt);
  \draw [black] ({360/15*1 }:5.5cm) circle (7pt);
  \draw [black] ({360/15*13 }:5.5cm) circle (7pt);
  \draw [black] ({360/15*12 }:5.5cm) circle (7pt);
  \draw [black] ({360/15*9 }:5.5cm) circle (7pt);
  \draw [black] ({360/15*8 }:5.5cm) circle (7pt);
  \draw [black] ({360/15*5 }:5.5cm) circle (7pt);
  
   \node[xshift=0.3cm, yshift=0.1cm] at ({360/15 *14}:5.5cm) {$v_{5}$};
   \node[xshift=0.35cm] at ({360/15 *13}:5.5cm) {$v_{6}$};
   \node[xshift=0.35cm, yshift=-0.1cm] at ({360/15 *12}:5.5cm) {$v_{7}$};
   \node[xshift=0.3cm, yshift=-0.155cm] at ({360/15 *11}:5.5cm) {$v_{8}$};
   \node[xshift=0.25cm, yshift=-0.3cm] at ({360/15 *10}:5.5cm) {$v_{9}$};
   \node[xshift=0.05cm, yshift=-0.35cm] at ({360/15 *9}:5.5cm) {$v_{10}$};
   \node[xshift=-0.05cm, yshift=-0.35cm] at ({360/15 *8}:5.5cm) {$v_{11}$};
   \node[xshift=-0.25cm, yshift=-0.2cm] at ({360/15 *7}:5.5cm) {$v_{12}$};
   \node[xshift=-0.35cm, yshift=-0.2cm] at ({360/15 *6}:5.5cm) {$v_{13}$};
   \node[xshift=-0.4cm] at ({360/15 *5}:5.5cm) {$v_{14}$};
   \node[xshift=-0.35cm, yshift=0.1cm] at ({360/15 *4}:5.5cm) {$v_{0}$};
   \node[xshift=-0.3cm, yshift=0.15cm] at ({360/15 *3}:5.5cm) {$v_{1}$};
   \node[xshift=-0.1cm, yshift=0.3cm] at ({360/15 *2}:5.5cm) {$v_{2}$};
   \node[xshift=0.1cm, yshift=0.25cm] at ({360/15 *1}:5.5cm) {$v_{3}$};
   \node[xshift=0.15cm, yshift=0.2cm] at ({360/15 *0}:5.5cm) {$v_{4}$};
  
  \foreach \from/\to in {v5/v20,v4/v19,v3/v18,v2/v17,v1/v16,v15/v30,v14/v29,v13/v28,v12/v27,v11/v26,v10/v25,v9/v24,v8/v23,v7/v22,v6/v21,v16/v18,v17/v19,v18/v20,v19/v21,v20/v22,v21/v23,v22/v24,v23/v25,v24/v26,v25/v27,v26/v28,v27/v29,v28/v30,v29/v16,v30/v17} \draw (\from) -- (\to);
  
  \end{tikzpicture}
\caption{An example for the upper bound of Lemma~\ref{bound_4}: a DDS of $P_{15,2}$.}\label{P_15,2}
\end{center}
\end{figure}

\begin{lem}\label{bound_4}
Let $\Sigma = (P_{n,k}, \sigma)$ be any signed generalized Petersen graph, where $\gcd(n,k) =1$ and $k\geq2$. Let $\lceil \frac{n}{k} \rceil = 2m$, for some $m\geq2$. Then $ n \leq \gamma_{\times 2}(\Sigma) \leq 2n-mk$.
\end{lem}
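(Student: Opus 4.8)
The plan is to mirror the argument of Lemma~\ref{bound_3}, adapting the block partition to the parity case $\lceil n/k \rceil = 2m$. The lower bound is immediate: applying Theorem~\ref{general bounds} to the cubic graph $P_{n,k}$, which has $2n$ vertices, gives $\gamma_{\times 2}(\Sigma)\geq n$. All the work lies in the upper bound, for which I will exhibit a single set $D$ that is a DDS of $P_{n,k}$ and whose cut $\Sigma[D:V\setminus D]$ is balanced for \emph{every} signature $\sigma$, with $|D| = 2n-mk$. First I partition $V_v$ into blocks $V_1,\dots,V_{2m}$, where $V_i=\{v_{(i-1)k},\dots,v_{(i-1)k+(k-1)}\}$ has size $k$ for $1\leq i\leq 2m-1$ and $V_{2m}=V_v\setminus\bigcup_{i=1}^{2m-1}V_i$ is the short leftover block of size $n-(2m-1)k$; note $1\leq |V_{2m}|<k$, since $(2m-1)k<n<2mk$ (the inequality $n<2mk$ is strict because $\gcd(n,k)=1$ and $k\geq 2$ force $k\nmid n$). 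I take $D=U\cup\big(\bigcup_{i=1}^{m}V_{2i}\big)$, i.e.\ all outer vertices together with the even blocks, the last of which is the short block $V_{2m}$. Counting, $|V_v\cap D|=(m-1)k+(n-(2m-1)k)=n-mk$, so $|D|=2n-mk$, as required; Figure~\ref{P_15,2} illustrates the case $P_{15,2}$.

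For double domination I check $|N[v]\cap D|\geq 2$ for all $v$. Since $U\subseteq D$, the outer cycle $C_o$ lies in $G[D]$, so each $u_i$ has its two outer neighbors in $D$. Each $v_i\in D$ (even block) has its spoke neighbor $u_i\in D$, giving two. For a vertex $v_j$ outside $D$, that is, in an odd block $V_1,V_3,\dots,V_{2m-1}$, the spoke $u_j\in D$ supplies one, and an inner neighbor supplies the other: for $v_j\in V_{2i-1}$ with $2\leq i\leq m$ the neighbor $v_{j-k}$ lies in the even block $V_{2i-2}\subseteq D$ (with no wrap-around, as $j-k\geq k$), while for $v_j\in V_1$ the neighbor $v_{j+k}$ lies in $V_2\subseteq D$ (again no wrap-around, since $j+k\leq 2k-1<n$). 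Hence $D$ double dominates $P_{n,k}$.

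For balance it suffices to show $\Sigma[D:V\setminus D]$ is acyclic, as an acyclic signed graph has no cycles at all and is therefore balanced. Because $C_o\subseteq G[D]$, each $u_i$ meets at most one cut edge (its spoke), so no $u$-vertex can lie on a cycle of the cut subgraph; any such cycle would be built from inner edges alone and hence would have to be the unique inner cycle $C_i$ (unique because $\gcd(n,k)=1$). I rule this out by exhibiting one edge of $C_i$ with both ends in $V\setminus D$: the strict inequality $n<2mk$ forces the $(+k)$-neighbor of the top vertex of $V_{2m-1}$ to wrap back into $V_1$, so that $v_{(2m-1)k-1}\in V_{2m-1}$ and $v_{2mk-1-n}\in V_1$ are adjacent on $C_i$ and both lie outside $D$. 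Thus $C_i\not\subseteq\Sigma[D:V\setminus D]$, the cut is acyclic, and hence balanced for every $\sigma$, giving $\gamma_{\times 2}(\Sigma)\leq 2n-mk$.

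The one genuinely new point compared with Lemma~\ref{bound_3} — and the step I expect to require the most care — is the seam created by the short block $V_{2m}$. In the $\lceil n/k\rceil=2m+1$ setting one has $n>2mk$, so the $(+k)$-neighbors never wrap and a simple witness such as $v_{k-1}v_{n-1}$ breaks $C_i$; here that edge is a cut edge because $v_{n-1}\in V_{2m}\subseteq D$. It is precisely the inequality $n<2mk$ that both obliges me to fall back on the $(-k)$-neighbor for the last odd block in the domination step and, conveniently, produces the intra-complement inner edge needed to break $C_i$ in the balance step, so the wrap-around must be handled explicitly rather than suppressed.
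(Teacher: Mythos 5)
Your proof is correct and follows essentially the same route as the paper: the same partition of $V_v$ into $2m$ blocks with a short final block, the same set $D=U\cup\bigl(\bigcup_{i=1}^{m}V_{2i}\bigr)$ with $|D|=2n-mk$, the same domination check via spokes plus one inner neighbour, and the same acyclicity argument for balance. The only point of divergence is the witness edge breaking the inner cycle: the paper claims $v_0$ and $v_{2mk-k-1}$ are adjacent, which holds only when $n=2mk-1$ (it fails, e.g., for $P_{16,3}$), whereas your edge $v_{(2m-1)k-1}v_{2mk-1-n}$ is valid throughout the range $(2m-1)k<n<2mk$, so your version in fact repairs a small slip in the published argument.
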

\begin{proof}
Let $V_{1},V_{2},...,V_{2m}$ be a partition of the set $V_v$ such that $V_{i}= \{v_{(i-1)k},v_{(i-1)k+1},...v_{(i-1)k+(k-1)}\}$ and $V_{2m} = V_v- \cup_{i=1}^{2m-1}V_i$ for $1 \leq i \leq 2m-1$. Note that $|V_{i}| = k$ for each $1 \leq i \leq 2m-1$. Thus we have $|V_{2m}| = n-k(2m-1)$.

Consider the set $D=U \cup \left(\cup_{i=1}^{m} V_{2i}\right)$. Clearly $|D| = n+k(m-1)+n-k(2m-1) = 2n-km$. For example, see Figure~\ref{P_15,2}. We prove that $D$ is a DDS of $\Sigma$, and this will give us the required upper bound.

 Since the cycle $C_o$ lies completely inside $G[D]$, every $u\text{-vertex}$ is dominated at least twice by $D$. Note that for each $v\text{-vertex}$, the corresponding neighbor $u\text{-vertex}$ is in $D$ as $U \subseteq D$. Thus to show that $D$ dominates every vertex of $V_{v}$ at least twice, we just need to show that each vertex of $V_v$ is either in $D$ or adjacent to at least one $v\text{-vertex}$ in $D$. Clearly each vertex of $\cup_{i=1}^{m} V_{2i}$ is in $D$. Further, for $1\leq i \leq m-1$, each vertex of $V_{2i-1}$ is adjacent to a $v\text{-vertex}$ in $D$ because each vertex of $V_{2i-1}$ is adjacent to a vertex of $V_{2i}$ and $V_{2i} \subseteq D$. Also each vertex of $V_{2m-1}$ is adjacent to a vertex of $V_{2m-2}$ and $V_{2m-2} \subseteq D$. Therefore each vertex of $V_{2m-1}$ is also adjacent to a $v\text{-vertex}$ in $D$. Hence $D$ is a DDS of $P_{n,k}$. 
 
 Now it remains to show that $\Sigma[D:V \setminus D]$ is balanced. To do so, it is enough to show that $\Sigma[D:V \setminus D]$ is acyclic. Note that every $u\text{-vertex}$ is adjacent to at most one vertex of $V \setminus D$ since $C_o$ lies completely inside $G[D]$. Therefore if $\Sigma[D:V \setminus D]$ contains any cycle then that cycle must be the inner cycle $C_{i}$ itself. Further, the vertex $v_{0} \in V_{1}$ and the vertex $v_{2mk-k-1} \in V_{2m-1}$. Also both the vertices $v_{0}~\text{and}~v_{2mk-k-1}$ belong to the set $V\setminus D$, and they are adjacent to each other. Therefore $\Sigma[D:V \setminus D]$ is acyclic, and so $\Sigma [D:V \setminus D]$ is balanced. Hence $D$ is a DDS of $\Sigma$. This implies that $\gamma_{\times 2}(\Sigma) \leq 2n-km$.

 The lower bound follows from Theorem~\ref{general bounds}, and the proof is complete.
\end{proof}

\begin{theorem}\label{bound_(n,k)=1}
Let $\Sigma = (P_{n,k}, \sigma)$ be any signed generalized Petersen graph, where $\gcd(n,k) =1$ and $k\geq2$. Then $ n \leq \gamma_{\times 2}(\Sigma) \leq \frac{3n}{2}. $
\end{theorem}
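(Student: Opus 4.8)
The plan is to obtain the theorem directly from Lemmas~\ref{bound_3} and~\ref{bound_4} by splitting on the parity of $\lceil n/k \rceil$ and then collapsing each lemma's upper bound into the single bound $3n/2$. The lower bound requires no new work: since $P_{n,k}$ is a cubic graph on $2n$ vertices, Theorem~\ref{general bounds} applied to it yields $\gamma_{\times 2}(\Sigma) \geq n$.

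For the upper bound I would first record that the hypotheses $\gcd(n,k)=1$ and $k \geq 2$ make $n/k$ non-integral, so $\lceil n/k \rceil$ strictly exceeds $n/k$, and the standing assumption $2k < n$ forces $\lceil n/k \rceil \geq 3$. Consequently every admissible pair $(n,k)$ falls into exactly one of the two regimes already treated: either $\lceil n/k \rceil = 2m+1$ with $m \geq 1$, or $\lceil n/k \rceil = 2m$ with $m \geq 2$. Thus the side conditions of the two lemmas are automatically satisfied and no boundary case is left over.

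The remaining work is a short piece of arithmetic on the ceiling, carried out separately for each parity. If $\lceil n/k \rceil = 2m+1$, then Lemma~\ref{bound_3} gives $\gamma_{\times 2}(\Sigma) \leq n + mk$, and unwinding the ceiling as $2m < n/k$ gives $2mk < n$, hence $mk < n/2$ and $n + mk < 3n/2$. If instead $\lceil n/k \rceil = 2m$, then Lemma~\ref{bound_4} gives $\gamma_{\times 2}(\Sigma) \leq 2n - mk$, and here the ceiling unwinds as $n/k \leq 2m$, giving $n \leq 2mk$, hence $mk \geq n/2$ and $2n - mk \leq 3n/2$. In either case $\gamma_{\times 2}(\Sigma) \leq 3n/2$, which finishes the proof. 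There is no genuine obstacle, since the two supporting lemmas already do all the structural work; the only subtlety is that the two parities push $mk$ in opposite directions---small in the odd case, large in the even case---so one must keep straight which side of the defining inequalities $2m < n/k \leq 2m+1$ and $2m-1 < n/k \leq 2m$ is being invoked.
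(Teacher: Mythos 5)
Your proof is correct and takes essentially the same route as the paper: both obtain the lower bound from Theorem~\ref{general bounds} and get the upper bound by splitting on the parity of $\lceil n/k \rceil$ and comparing $mk$ with $n/2$ from the appropriate side in Lemmas~\ref{bound_3} and~\ref{bound_4}. Your extra observation that $2k<n$ forces $\lceil n/k\rceil \geq 3$, so the lemmas' side conditions $m\geq 1$ and $m\geq 2$ hold automatically, is a small point the paper leaves implicit.
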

\begin{proof}
For any positive integers $n~\text{and}~k$ it is always true that $\lfloor \frac{n}{k} \rfloor \leq \frac{n}{k} \leq \lceil \frac{n}{k} \rceil$. So for $\lceil \frac{n}{k} \rceil = 2m$, we have $\frac{n}{2} \leq mk$. Therefore the upper bound of Lemma~\ref{bound_4} can be replaced by $\frac{3n}{2}$.

Further, with the assumptions of Lemma~\ref{bound_3}, we have $2m = \lfloor \frac{n}{k} \rfloor \leq \frac{n}{k} \leq \lceil \frac{n}{k} \rceil = 2m+1$ and this implies that $mk \leq \frac{n}{2} $. Thus the upper bound of Lemma~\ref{bound_3} can also be replaced by $\frac{3n}{2}$. Hence we conclude that $ n \leq \gamma_{\times 2}(\Sigma) \leq \frac{3n}{2}$. This completes the proof.
\end{proof}

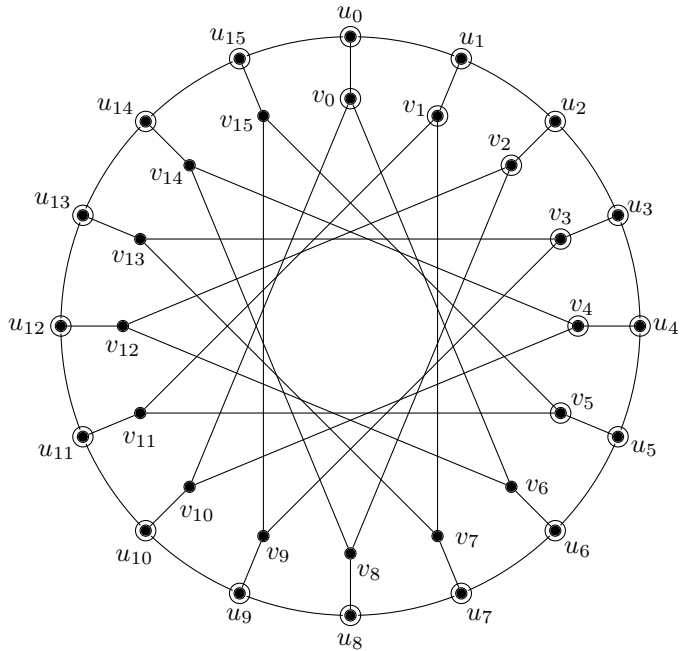
\begin{figure}[h]
\begin{center}
\begin{tikzpicture}[scale=0.55]

		\node[vertex] (v5) at ({360/16*0 }:7cm) {};
		\node[vertex] (v4) at ({360/16*1 }:7cm) {};
		\node[vertex] (v3) at ({360/16 *2 }:7cm) {};
		\node[vertex] (v2) at ({360/16 *3 }:7cm) {};
		\node[vertex] (v1) at ({360/16 *4 }:7cm) {};
		\node[vertex] (v16) at ({360/16 *5}:7cm) {};
		\node[vertex] (v15) at ({360/16 *6 }:7cm) {};
		\node[vertex] (v14) at ({360/16 *7}:7cm) {};
		\node[vertex] (v13) at ({360/16 *8}:7cm) {};
		\node[vertex] (v12) at ({360/16 *9}:7cm) {};
		\node[vertex] (v11) at ({360/16 *10}:7cm) {};
		\node[vertex] (v10) at ({360/16 *11}:7cm) {};
        \node[vertex] (v9) at ({360/16 *12}:7cm) {};
        \node[vertex] (v8) at ({360/16 *13}:7cm) {};
        \node[vertex] (v7) at ({360/16 *14}:7cm) {};
        \node[vertex] (v6) at ({360/16 *15}:7cm) {};
        \draw ({360/16 *0 +1.5}:7cm) arc  ({360/16*0+1.5}:{360/16*1 -1.5 }:7cm);
        \draw ({360/16 *1 +1.5}:7cm) arc  ({360/16*1+1.5}:{360/16*2 -1.5 }:7cm);
        \draw ({360/16 *2 +1.5}:7cm) arc  ({360/16*2+1.5}:{360/16*3 -1.5 }:7cm);
        \draw ({360/16 *3 +1.5}:7cm) arc  ({360/16*3+1.5}:{360/16*4 -1.5 }:7cm);
        \draw ({360/16 *4 +1.5}:7cm) arc  ({360/16*4+1.5}:{360/16*5 -1.5 }:7cm);
        \draw ({360/16 *5 +1.5}:7cm) arc  ({360/16*5+1.5}:{360/16*6 -1.5 }:7cm);
        \draw ({360/16 *6 +1.5}:7cm) arc  ({360/16*6+1.5}:{360/16*7 -1.5 }:7cm);
        \draw ({360/16 *7 +1.5}:7cm) arc  ({360/16*7+1.5}:{360/16*8 -1.5 }:7cm);
        \draw ({360/16 *8 +1.5}:7cm) arc  ({360/16*8+1.5}:{360/16*9 -1.5 }:7cm);
        \draw ({360/16 *9 +1.5}:7cm) arc  ({360/16*9+1.5}:{360/16*10 -1.5 }:7cm);
        \draw ({360/16 *10 +1.5}:7cm) arc  ({360/16*10+1.5}:{360/16*11 -1.5 }:7cm);
        \draw ({360/16 *11+1.5}:7cm) arc  ({360/16*11+1.5}:{360/16*12 -1.5 }:7cm);
        \draw ({360/16 *12 +1.5}:7cm) arc  ({360/16*12+1.5}:{360/16*13 -1.5 }:7cm);
        \draw ({360/16 *13 +1.5}:7cm) arc  ({360/16*13+1.5}:{360/16*14 -1.5 }:7cm);
        \draw ({360/16 *14 +1.5}:7cm) arc  ({360/16*14+1.5}:{360/16*15 -1.5 }:7cm);
        \draw ({360/16 *15 +1.5}:7cm) arc  ({360/16*15+1.5}:{360/16*16 -1.5 }:7cm);
        
        \node[xshift=0.35cm] at ({360/16*0 }:7cm) {$u_{4}$};
		\node[xshift=0.3cm, yshift=0.1cm] at ({360/16*1 }:7cm) {$u_{3}$};
		\node[xshift=0.25cm, yshift=0.2cm] at ({360/16 *2 }:7cm) {$u_{2}$};
		\node[xshift=0.15cm,yshift=0.25cm] at ({360/16 *3 }:7cm) {$u_{1}$};
		\node[ yshift=0.3cm] at ({360/16 *4 }:7cm) {$u_{0}$};
		\node[xshift=-0.15cm, yshift=0.3cm] at ({360/16 *5}:7cm) {$u_{15}$};
		\node[xshift=-0.4cm, yshift=0.2cm] at ({360/16 *6 }:7cm) {$u_{14}$};
		\node[xshift=-0.4cm, yshift=0.2cm] at ({360/16 *7}:7cm) {$u_{13}$};
		\node[xshift=-0.45cm] at ({360/16 *8}:7cm) {$u_{12}$};
		\node[xshift=-0.35cm,  yshift=-0.2cm] at ({360/16 *9}:7cm) {$u_{11}$};
		\node[xshift=-0.15cm, yshift=-0.35cm] at ({360/16 *10}:7cm) {$u_{10}$};
		\node[ yshift=-0.3cm] at ({360/16 *11}:7cm) {$u_{9}$};
        \node[yshift=-0.35cm] at ({360/16 *12}:7cm) {$u_{8}$};
        \node[xshift=0.25cm, yshift=-0.3cm] at ({360/16 *13}:7cm) {$u_{7}$};
  \node[xshift=0.3cm, yshift=-0.25cm] at ({360/16 *14}:7cm) {$u_{6}$};
  \node[xshift=0.35cm, yshift=-0.15cm] at ({360/16 *15}:7cm) {$u_{5}$};
  
  \filldraw[black] ({360/16*0 }:7cm) circle (3pt);
  \filldraw [black] ({360/16 *1}:7cm) circle (3pt);
  \filldraw [black] ({360/16 *2}:7cm) circle (3pt);
  \filldraw [black] ({360/16 *3}:7cm) circle (3pt);
  \filldraw [black] ({360/16 *4}:7cm) circle (3pt);
  \filldraw [black] ({360/16 *5}:7cm) circle (3pt);
  \filldraw [black] ({360/16 *6}:7cm) circle (3pt);
  \filldraw [black] ({360/16 *7}:7cm) circle (3pt);
  \filldraw [black] ({360/16 *8}:7cm) circle (3pt);
  \filldraw [black] ({360/16 *9}:7cm) circle (3pt);
  \filldraw [black] ({360/16 *10}:7cm) circle (3pt);
  \filldraw [black] ({360/16 *11}:7cm) circle (3pt);
  \filldraw [black] ({360/16 *12}:7cm) circle (3pt);
  \filldraw [black] ({360/16 *13}:7cm) circle (3pt);
  \filldraw [black] ({360/16 *14}:7cm) circle (3pt);
  \filldraw [black] ({360/16 *15}:7cm) circle (3pt); 
  
  \draw [black] ({360/16*0 }:7cm) circle (7pt);
  \draw [black] ({360/16 *1}:7cm) circle (7pt);
  \draw [black] ({360/16 *2}:7cm) circle (7pt);
  \draw [black] ({360/16 *3}:7cm) circle (7pt);
  \draw [black] ({360/16 *4}:7cm) circle (7pt);
  \draw [black] ({360/16 *5}:7cm) circle (7pt);
  \draw [black] ({360/16 *6}:7cm) circle (7pt);
  \draw [black] ({360/16 *7}:7cm) circle (7pt);
  \draw [black] ({360/16 *8}:7cm) circle (7pt);
  \draw [black] ({360/16 *9}:7cm) circle (7pt);
  \draw [black] ({360/16 *10}:7cm) circle (7pt);
  \draw [black] ({360/16 *11}:7cm) circle (7pt);
  \draw [black] ({360/16 *12}:7cm) circle (7pt);
  \draw [black] ({360/16 *13}:7cm) circle (7pt);
  \draw [black] ({360/16 *14}:7cm) circle (7pt);
  \draw [black] ({360/16 *15}:7cm) circle (7pt);
    
		\node[vertex] (v21) at ({360/16*0 }:5.5cm) {};
		\node[vertex] (v20) at ({360/16*1 }:5.5cm) {};
		\node[vertex] (v19) at ({360/16 *2 }:5.5cm) {};
		\node[vertex] (v18) at ({360/16 *3 }:5.5cm) {};
		\node[vertex] (v17) at ({360/16 *4 }:5.5cm) {};
		\node[vertex] (v32) at ({360/16 *5}:5.5cm) {};
		\node[vertex] (v31) at ({360/16 *6 }:5.5cm) {};
		\node[vertex] (v30) at ({360/16 *7}:5.5cm) {};
		\node[vertex] (v29) at ({360/16 *8}:5.5cm) {};
		\node[vertex] (v28) at ({360/16 *9}:5.5cm) {};
		\node[vertex] (v27) at ({360/16 *10}:5.5cm) {};
		\node[vertex] (v26) at ({360/16 *11}:5.5cm) {};
        \node[vertex] (v25) at ({360/16 *12}:5.5cm) {};
        \node[vertex] (v24) at ({360/16 *13}:5.5cm) {};
        \node[vertex] (v23) at ({360/16 *14}:5.5cm) {};
        \node[vertex] (v22) at ({360/16 *15}:5.5cm) {};

       \filldraw[black] ({360/16*0 }:5.5cm) circle (3pt);
  \filldraw [black] ({360/16 *1}:5.5cm) circle (3pt);
  \filldraw [black] ({360/16 *2}:5.5cm) circle (3pt);
  \filldraw [black] ({360/16 *3}:5.5cm) circle (3pt);
  \filldraw [black] ({360/16 *4}:5.5cm) circle (3pt);
  \filldraw [black] ({360/16 *5}:5.5cm) circle (3pt);
  \filldraw [black] ({360/16 *6}:5.5cm) circle (3pt);
  \filldraw [black] ({360/16 *7}:5.5cm) circle (3pt);
  \filldraw [black] ({360/16 *8}:5.5cm) circle (3pt);
  \filldraw [black] ({360/16 *9}:5.5cm) circle (3pt);
  \filldraw [black] ({360/16 *10}:5.5cm) circle (3pt);
  \filldraw [black] ({360/16 *11}:5.5cm) circle (3pt);
  \filldraw [black] ({360/16 *12}:5.5cm) circle (3pt);
  \filldraw [black] ({360/16 *13}:5.5cm) circle (3pt);
  \filldraw [black] ({360/16 *14}:5.5cm) circle (3pt);
  \filldraw [black] ({360/16 *15}:5.5cm) circle (3pt);         
        
        \draw ({360/16 *0 +1.5}:5.5cm) ;
        \draw ({360/16 *1 +1.5}:5.5cm) ;
        \draw ({360/16 *2 +1.5}:5.5cm) ;
        \draw ({360/16 *3 +1.5}:5.5cm) ;
        \draw ({360/16 *4 +1.5}:5.5cm) ;
        \draw ({360/16 *5 +1.5}:5.5cm) ;
        \draw ({360/16 *6 +1.5}:5.5cm) ;
        \draw ({360/16 *7 +1.5}:5.5cm) ;
        \draw ({360/16 *8 +1.5}:5.5cm) ;
        \draw ({360/16 *9 +1.5}:5.5cm) ;
        \draw ({360/16 *10 +1.5}:5.5cm) ;
        \draw ({360/16 *11+1.5}:5.5cm) ;
        \draw ({360/16 *12 +1.5}:5.5cm) ;
        \draw ({360/16 *13 +1.5}:5.5cm) ;
        \draw ({360/16 *14 +1.5}:5.5cm) ;
        \draw ({360/16 *15 +1.5}:5.5cm) ;
        
    \draw [black] ({360/16 *0}:5.5cm) circle (7pt);
   \draw [black] ({360/16 *1}:5.5cm) circle (7pt);
   \draw [black] ({360/16 *2}:5.5cm) circle (7pt);
   \draw [black] ({360/16 *3}:5.5cm) circle (7pt);
   \draw [black] ({360/16 *4}:5.5cm) circle (7pt);  
   \draw [black] ({360/16 *15}:5.5cm) circle (7pt);   
        
       \node[xshift=0.05cm, yshift=0.25cm] at ({360/16*0 }:5.5cm) {$v_{4}$};
		\node[ yshift=0.3cm] at ({360/16*1 }:5.5cm) {$v_{3}$};
		\node[xshift=-0.15cm, yshift=0.3cm] at ({360/16 *2 }:5.5cm) {$v_{2}$};
		\node[xshift=-0.3cm, yshift=0.1cm] at ({360/16 *3 }:5.5cm) {$v_{1}$};
		\node[ xshift=-0.35cm] at ({360/16 *4 }:5.5cm) {$v_{0}$};
		\node[xshift=-0.35cm, yshift=-0.1cm] at ({360/16 *5}:5.5cm) {$v_{15}$};
		\node[xshift=-0.3cm, yshift=-0.15cm] at ({360/16 *6 }:5.5cm) {$v_{14}$};
		\node[xshift=-0.15cm, yshift=-0.25cm] at ({360/16 *7}:5.5cm) {$v_{13}$};
		\node[yshift=-0.3cm] at ({360/16 *8}:5.5cm) {$v_{12}$};
		\node[ yshift=-0.35cm] at ({360/16 *9}:5.5cm) {$v_{11}$};
		\node[xshift=0.1cm, yshift=-0.35cm] at ({360/16 *10}:5.5cm) {$v_{10}$};
		\node[xshift=0.2cm, yshift=-0.25cm] at ({360/16 *11}:5.5cm) {$v_{9}$};
        \node[xshift=0.25cm, yshift=-0.25cm] at ({360/16 *12}:5.5cm) {$v_{8}$};
        \node[xshift=0.4cm, yshift=-0.05cm] at ({360/16 *13}:5.5cm) {$v_{7}$};
  \node[xshift=0.35cm] at ({360/16 *14}:5.5cm) {$v_{6}$};
  \node[xshift=0.3cm, yshift=0.15cm] at ({360/16 *15}:5.5cm) {$v_{5}$};    

       \foreach \from/\to in {v1/v17,v2/v18,v3/v19,v4/v20,v5/v21,v6/v22,v7/v23,v8/v24,v9/v25,v10/v26,v11/v27,v12/v28,v13/v29,v14/v30,v15/v31,v16/v32,v17/v23,v18/v24,v19/v25,v20/v26,v21/v27,v22/v28,v23/v29,v24/v30,v25/v31,v26/v32,v27/v17,v28/v18,v29/v19,v30/v20,v31/v21,v32/v22} \draw (\from) -- (\to);
        
	\end{tikzpicture}
\caption{An example for the upper bound of Theorem~\ref{bound_5}: a DDS of $P_{16,6}$.}\label{P_16,6}
\end{center}
\end{figure}

Finally, we give a lower bound and an upper bound for the DDN of signed generalized Petersen graphs, where $\gcd(n,k)=d \geq 2$.

\begin{theorem}\label{bound_5}
Let $\Sigma = (P_{n,k}, \sigma)$ be any signed generalized Petersen graph, where $\gcd(n,k)=d \geq 2$. Then $$n \leq \gamma_{\times 2}(\Sigma) \leq n+d \Big\lceil \frac{n}{3d} \Big\rceil.$$
\end{theorem}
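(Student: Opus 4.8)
The lower bound is immediate from Theorem~\ref{general bounds}, since $P_{n,k}$ is a cubic graph on $2n$ vertices. All the work is therefore in producing a DDS of size $n+d\lceil \frac{n}{3d}\rceil$, and my plan follows the template of Lemmas~\ref{bound_3} and~\ref{bound_4}: force all of $U$ into $D$ and then adjoin a carefully chosen set of $v$-vertices. Putting $U\subseteq D$ already guarantees that $C_{o}$ lies in $G[D]$, so every $u$-vertex is dominated at least twice, and that every $v$-vertex receives one domination from its spoke. It also guarantees that each $u_{i}$ is incident to at most one edge of $[D:V\setminus D]$ (only its spoke, and only when $v_{i}\notin D$), a fact I will reuse for balance.

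Since $\gcd(n,k)=d$, the subgraph induced by $V_{v}$ consists of $d$ vertex-disjoint inner cycles, each of length $n/d$; because $d\le k$ and $2k<n$ we have $n/d\ge n/k>2$, so each inner cycle is a genuine cycle $C_{n/d}$ with $n/d\ge 3$. On each such cycle I would place a minimum dominating set of the cycle, namely an \emph{every-third-vertex} set of size $\lceil \frac{n/d}{3}\rceil=\lceil \frac{n}{3d}\rceil$, and let $S$ be the union of these $d$ sets, $D=U\cup S$. Then $|D|=n+d\lceil \frac{n}{3d}\rceil$, which is the target size. For double domination it remains only to check the second domination of the $v$-vertices: every $v$-vertex is either chosen (hence in $D$) or, by the dominating-set property on its inner cycle, adjacent to a chosen $v$-vertex of $D$; together with its spoke neighbor this yields two dominators.

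The crux is showing that $\Sigma[D:V\setminus D]$ is balanced, for which (as in the earlier lemmas) it suffices to show that it is acyclic. Because each $u$-vertex has degree at most one in the cut, no cycle of the cut can pass through a $u$-vertex; hence any cycle of $\Sigma[D:V\setminus D]$ uses only $v$-vertices and inner edges. The only cycles available among the $v$-vertices are the $d$ inner cycles themselves, so a cut-cycle would have to be an entire inner cycle $C_{n/d}$, which forces every inner edge along it to be a cut edge, i.e.\ the chosen and unchosen vertices to alternate perfectly around the cycle. This is precisely what the every-third choice avoids: it leaves two consecutive \emph{unchosen} vertices, so some inner edge of each cycle has both ends in $V\setminus D$ and is not a cut edge. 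Thus no inner cycle lies entirely in the cut, $\Sigma[D:V\setminus D]$ is acyclic and hence balanced, so $D$ is a DDS and $\gamma_{\times 2}(\Sigma)\le n+d\lceil \frac{n}{3d}\rceil$.

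I expect the balance verification to be the main obstacle, and the single delicate point is the choice of dominating set on each inner cycle: a careless minimum dominating set (for instance, two opposite vertices of a $4$-cycle) could turn an entire inner cycle into a cut-cycle and destroy acyclicity. What makes the argument work is that the every-third pattern always leaves two consecutive unchosen vertices on each $C_{n/d}$ — immediate when $n/d=3$, since the single chosen vertex leaves an adjacent pair, and for $n/d\ge 4$ because the gap between the first two chosen vertices contains exactly two unchosen ones — so at least one inner edge per cycle escapes the cut, which is exactly the hypothesis needed to rule out a cut-cycle.
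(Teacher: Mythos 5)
Your proposal is correct and follows essentially the same route as the paper: take $D=U$ together with an every-third-vertex set on each of the $d$ inner $\tfrac{n}{d}$-cycles, verify double domination via the spokes plus the inner dominating sets, and establish balance by noting that any cut-cycle would have to be a full inner cycle, which is excluded because each inner cycle contains two consecutive unchosen vertices (the paper names them explicitly as $v_{(r-1)+k}$ and $v_{(r-1)+2k}$). Your additional remarks on why the every-third pattern is the right choice only make the argument more explicit than the paper's.
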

\begin{proof}
Since $\gcd(n,k)=d\geq2$, $P_{n,k}$ has exactly $d$ disjoint $\frac{n}{d}\text{-cycles}$ induced by vertices of $V_{v}$. For each $1 \leq r \leq d$, let $C_{r} = v_{(r-1)}v_{(r-1)+k}v_{(r-1)+2k}...v_{(r-1)+(\frac{n}{d}-1)k}v_{(r-1)}$ be a cycle of length $\frac{n}{d}$. Let $V_{r} = \{v_{(r-1)+3(j-1)k}~|~j = 1,2,...,\lceil \frac{n}{3d} \rceil\} \subset V(C_{r})$. For each $1 \leq r \leq d$, it is clear that $|V_r| = \lceil \frac{n}{3d} \rceil$. Note that every vertex of $V(C_{r}) \setminus V_r$ is adjacent to at least one vertex of $V_r$ for $1 \leq r \leq d$.

  To get the upper bound, consider the set $D = U \cup \left(\cup_{r=1}^{d}V_{r}\right)$. For example, see Figure~\ref{P_16,6}. It is clear that $|D| = n+d \lceil \frac{n}{3d} \rceil$. Also it is clear that $D$ dominates every vertex of $U$ at least twice. Note that every vertex of $V_{v}$ is either in $D$ and has one neighbor in $D$ or in $V\setminus D$ and has two neighbors in $D$. Therefore $D$ is a DDS of $P_{n,k}$.
  
Now we show that $\Sigma[D:V \setminus D]$ is balanced. Note that if $\Sigma[D:V \setminus D]$ contains any cycle then that cycle must be one of the $C_r\text{'s}$, for some $1 \leq r \leq d$, as the outer cycle $C_o$ completely lies inside $G[D]$. Also for each $1 \leq r \leq d$, two consecutive vertices $v_{(r-1)+k}~\text{and}~v_{(r-1)+2k}$ of $C_r$ are contained in $V \setminus D$. This implies that $\Sigma[D:V \setminus D]$ cannot contain a cycle. Hence $\Sigma[D:V \setminus D]$ is acyclic, and so $\Sigma[D:V \setminus D]$ is balanced. Thus we have $\gamma_{\times 2}(\Sigma) \leq n+d\lceil \frac{n}{3d} \rceil$.
  
The lower bound follows from Theorem~\ref{general bounds}, and the proof is complete. 
\end{proof}

\subsection{Bounds on $\gamma_{\times 2}(I(n,j,k),\sigma)$}

It is clear that $P_{n,k} = I(n,1,k)$. Since $I(n,j,k) = I(n,k,j)$ and we wish to get the bounds on $\gamma_{\times2}(I(n,j,k),\sigma)$, we assume that $2\leq j\leq k$. The following theorem gives bounds on $\gamma_{\times2}(I(n,j,k),\sigma)$, for $\gcd(n,k)=1$.

\begin{theorem}\label{bound_I-graph_(n,k)=1}
Let $\Sigma = (I(n,j,k),\sigma)$ be any signed I-graph, where $\gcd(n,k)=1~and~k\geq 2$. Then $$n \leq \gamma_{\times2}(\Sigma) \leq \frac{3n}{2}.$$
\end{theorem}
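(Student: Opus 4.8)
The plan is to reduce the statement to the generalized Petersen case already settled in Theorem~\ref{bound_(n,k)=1}. First I would record that $I(n,j,k)$ is a cubic graph on $2n$ vertices: each $u_i$ is incident with the two outer edges $u_iu_{i+j}$, $u_{i-j}u_i$ and the spoke $u_iv_i$, while each $v_i$ is incident with $v_iv_{i+k}$, $v_{i-k}v_i$ and the spoke $u_iv_i$. Taking $m=n$ in Theorem~\ref{general bounds} then yields the lower bound $n\le\gamma_{\times2}(\Sigma)$ at once.

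For the upper bound I would re-use, essentially verbatim, the dominating sets built in Lemmas~\ref{bound_3} and \ref{bound_4}. Partition $V_v$ into the blocks $V_i=\{v_{(i-1)k},\dots,v_{(i-1)k+(k-1)}\}$ exactly as there and set $D=U\cup\big(\cup_i V_{2i}\big)$, the union of all outer vertices with the even-indexed blocks; the number of blocks used depends only on the parity of $\lceil n/k\rceil$, so the two cases of Lemmas~\ref{bound_3} and \ref{bound_4} apply and give $|D|=n+mk$ or $|D|=2n-mk$. The crucial observation is that those proofs invoke the outer cycle $C_o$ only through two facts: that every outer edge lies inside $G[D]$, and that consequently each $u$-vertex is dominated at least twice and is adjacent to at most one vertex of $V\setminus D$. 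Both facts depend solely on $U\subseteq D$, not on how many components the outer subgraph has; hence they remain valid for an I-graph, whose outer subgraph is a union of $\gcd(n,j)$ cycles all lying inside $G[D]$.

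With that in hand the domination check is identical to the Petersen argument: every $u$-vertex is dominated twice because its outer neighbours lie in $D$, and every $v$-vertex is dominated once through its spoke $u_iv_i$ and a second time either by belonging to an even block or, via the jump-$k$ adjacency $v_i\sim v_{i+k}$, by its inner neighbour in an even block. For balance, since $U\subseteq D$ each $u$-vertex carries at most one cut edge (its spoke), so any cycle of $\Sigma[D:V\setminus D]$ would have to lie in the inner subgraph; as $\gcd(n,k)=1$ this inner subgraph is the single $n$-cycle $C_i$, and the construction always leaves an inner edge with both ends in $V\setminus D$, so $C_i$ is not entirely a cut. Thus $\Sigma[D:V\setminus D]$ is acyclic, hence balanced, and $D$ is a DDS. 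Finally the two size estimates combine with $mk\le n/2$ in the odd case and $mk\ge n/2$ in the even case, exactly as in Theorem~\ref{bound_(n,k)=1}, to give $|D|\le\tfrac{3n}{2}$.

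The only content beyond transcription is the justification that the outer cycle of $P_{n,k}$ may be replaced by the multi-cycle outer subgraph of $I(n,j,k)$ without affecting any step, and I expect this to be the sole point requiring care, since every other line of Lemmas~\ref{bound_3}, \ref{bound_4} and Theorem~\ref{bound_(n,k)=1} transfers unchanged. Along the way one should confirm that under the standing assumptions $2\le j\le k$, $\gcd(n,k)=1$, $k\ge 2$ the graph $I(n,j,k)$ is genuinely simple and cubic, so that Theorem~\ref{general bounds} indeed applies.
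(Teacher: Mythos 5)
Your proposal is correct and follows exactly the route of the paper: the paper's proof likewise takes the lower bound from Theorem~\ref{general bounds} and, for the upper bound, reuses the sets $D$ of Lemmas~\ref{bound_3} and~\ref{bound_4} verbatim (noting that only $U\subseteq D$ and the inner-cycle structure, which depends on $\gcd(n,k)$ alone, matter) and then repeats the averaging step of Theorem~\ref{bound_(n,k)=1}. You in fact spell out the transfer from the single outer cycle of $P_{n,k}$ to the $\gcd(n,j)$ outer cycles of $I(n,j,k)$ more carefully than the paper, which simply says the same sets ``will be a DDS.''
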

\begin{proof}
The lower bound follows from Theorem~\ref{general bounds}.

If $\lceil \frac{n}{k} \rceil = 2m$, the set $D$ as considered in Lemma~\ref{bound_4} will be a DDS of $\Sigma$. Therefore $\gamma_{\times2}(\Sigma) \leq 2n-mk$. If $\lceil \frac{n}{k} \rceil = 2m+1$, the set $D$ as considered in Lemma~\ref{bound_3} will be a DDS of $\Sigma$. Therefore $\gamma_{\times2}(\Sigma) \leq n+mk$. To get the required upper bound we mimic the proof of Theorem~\ref{bound_(n,k)=1}. This completes the proof.
\end{proof}

In the following theorem we give bounds on $\gamma_{\times2}(I(n,j,k),\sigma)$, where $\gcd(n,k) \geq 2$.

\begin{theorem}\label{bound_I-graph_(n,k)=d}
Let $\Sigma = (I(n,j,k),\sigma)$ be any signed I-graph, where $\gcd(n,k)=d \geq 2$. Then $$n \leq \gamma_{\times 2}(\Sigma) \leq n+d \Big\lceil \frac{n}{3d} \Big\rceil.$$
\end{theorem}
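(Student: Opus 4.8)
The plan is to mimic the proof of Theorem~\ref{bound_5} almost verbatim, exploiting the fact that $I(n,j,k)$ and $P_{n,k}$ share exactly the same inner structure (the edges $v_iv_{i+k}$) and that the construction places the entire outer vertex set $U$ inside the dominating set, so that the outer parameter $j$ never enters the argument. The lower bound $n\le\gamma_{\times2}(\Sigma)$ is immediate from Theorem~\ref{general bounds}, since $I(n,j,k)$ is a cubic graph on $2n$ vertices.

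For the upper bound I would first record that, because $\gcd(n,k)=d\ge2$, the subgraph induced by $V_v$ consists of exactly $d$ vertex-disjoint $\tfrac{n}{d}$-cycles $C_1,\dots,C_d$, with $C_r=v_{(r-1)}v_{(r-1)+k}\cdots v_{(r-1)+(\frac{n}{d}-1)k}v_{(r-1)}$; this depends only on $k$, not on $j$, so it is identical to the situation in Theorem~\ref{bound_5}. I then take the same ``every third vertex'' subset $V_r=\{v_{(r-1)+3(t-1)k}\mid t=1,\dots,\lceil\frac{n}{3d}\rceil\}\subset V(C_r)$ (using the running index $t$ to avoid a clash with the parameter $j$), and set $D=U\cup\bigl(\cup_{r=1}^{d}V_r\bigr)$, so that $|D|=n+d\lceil\frac{n}{3d}\rceil$, as required.

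The verification of the two conditions of Definition~\ref{def_dd} then transfers directly. Since $U\subseteq D$ and each outer vertex $u_i$ has its two outer neighbours $u_{i\pm j}$ in $U\subseteq D$, we get $|N[u_i]\cap D|\ge3$, so every $u$-vertex is dominated at least twice; this is the only place where a dependence on $j$ could appear, and it is harmless precisely because the outer neighbours lie in $D$ regardless of $j$. Each $v$-vertex is dominated through its spoke (whose endpoint $u_i\in U\subseteq D$) together with, when it lies outside $D$, an inner neighbour in some $V_r$, the ``every third vertex'' choice guaranteeing that every vertex of $V(C_r)\setminus V_r$ is adjacent to a vertex of $V_r$. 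For balance I would again show that $\Sigma[D:V\setminus D]$ is acyclic: because $U\subseteq D$, every edge $u_iu_{i+j}$ of the outer subgraph lies inside $D$ and is absent from the edge cut, so each $u$-vertex meets at most one cut edge (its spoke) and cannot lie on a cycle of the cut graph; hence any such cycle would have to be some inner cycle $C_r$, but the consecutive vertices $v_{(r-1)+k}$ and $v_{(r-1)+2k}$ both lie in $V\setminus D$, breaking $C_r$. Thus the cut graph is acyclic, hence balanced, and $D$ is a double dominating set of $\Sigma$, giving $\gamma_{\times2}(\Sigma)\le n+d\lceil\frac{n}{3d}\rceil$.

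I expect no genuine obstacle here: the whole content is the observation that the outer parameter $j$ is invisible to the argument once $U\subseteq D$, exactly as in the passage from Theorem~\ref{bound_(n,k)=1} to Theorem~\ref{bound_I-graph_(n,k)=1}. The only point requiring a moment's care is confirming that replacing the outer $n$-cycle of $P_{n,k}$ by the (possibly disconnected) $2$-regular outer subgraph of $I(n,j,k)$ alters neither the domination count of the $u$-vertices nor the degree-at-most-one bound on $u$-vertices in the cut graph used in the acyclicity step; both survive because the relevant outer edges always sit inside $D$.
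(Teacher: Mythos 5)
Your proposal is correct and follows essentially the same route as the paper: the paper's proof simply observes that the set $D = U \cup \left(\cup_{r=1}^{d}V_{r}\right)$ from Theorem~\ref{bound_5} contains all of $U$ and that the inner cycle structure of $I(n,j,k)$ coincides with that of $P_{n,k}$, so the same set works for any signed I-graph. You merely spell out in full the verification that the paper leaves implicit, including the key point that the outer parameter $j$ never enters once $U \subseteq D$.
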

\begin{proof}
The lower bound follows from Theorem~\ref{general bounds}.

Note that the structure of cycles induced by vertices of $V_{v}$ of $I(n,j,k)$ is same as the structure of cycles induced by vertices of $V_{v}$ of $P_{n,k}$. Since the set $D$ considered in proof of the Theorem~\ref{bound_5} contains the whole set $U$, this same set $D$ will be a DDS of any $\Sigma = (I(n,j,k),\sigma)$. Therefore $$n \leq \gamma_{\times 2}(\Sigma) \leq n+d \Big\lceil \frac{n}{3d} \Big\rceil.$$
This completes the proof.
\end{proof}

\noindent
\textbf{Acknowledgment.} The first author is grateful to Indian Institute of Technology Guwahati, India for providing him a graduate fellowship to carry out the research.

\end{document}